\numberwithin{equation}{section}
\newtheorem{thm}{Theorem}[section]
\newtheorem{lem}[thm]{Lemma}
\theoremstyle{definition}
\newcommand{\norm}[1]{\left\Vert#1\right\Vert}
\newcommand{\abs}[1]{\left\vert#1\right\vert}
\newcommand{\diag}[1]{\mathrm{diag}(#1)}
\newcommand{\var}[1]{\mathrm{Var}\left(#1\right)}
\newcommand{\cov}[1]{\mathrm{Cov}(#1)}
\newcommand{\mrd}{\mathrm{d}}
\newcommand{\mbe}{\mathbb{E}}
\newcommand{\vech}{\mathrm{vech}}
\newcommand{\sobs}{s^*}
\newcommand{\yobs}{y^*}
\newcommand{\pabc}{\tilde{p}}
\newcommand{\kl}{\mathrm{KL}}
\newcommand{\mbp}{\mathbb{P}}
\newcommand{\mbr}{\mathbb{R}}
\newcommand{\mbn}{\mathbb{N}}
\newcommand{\hk}{V_{\mathrm{HK}}}
\crefname{hypothesis}{Hypothesis}{Hypotheses}
\title{Unbiased MLMC-based variational Bayes for likelihood-free inference\thanks{Submitted to the editors DATE.
\funding{This work of the first author was funded by the National Science Foundation of China (No. 12071154), Guangdong Basic and Applied Basic Research Foundation (No. 2021A1515010275), Guangzhou Science and Technology Program (No. 202102020407). And the third author was funded by the National Science Foundation of China (No. 720711119).}}}
\author{Zhijian He\thanks{School of Mathematics, South China University of Technology, Guangzhou 510641, People's Republic of China (\email{hezhijian@scut.edu.cn}).}
\and Zhenghang Xu\thanks{Corresponding author. Department of Mathematical Sciences, Tsinghua University, Beijing 100084, People's Republic of China (\email{xzh17@mails.tsinghua.edu.cn}).}
\and Xiaoqun Wang\thanks{Department of Mathematical Sciences, Tsinghua University, Beijing 100084, People's Republic of China (\email{wangxiaoqun@mail.tsinghua.edu.cn}).}}
\begin{document}

\maketitle

\begin{abstract}
Variational Bayes (VB) is a popular tool for Bayesian inference in statistical modeling. Recently, some VB algorithms are proposed to handle intractable likelihoods with applications such as approximate Bayesian computation.  In this paper, we propose several unbiased estimators based on multilevel Monte Carlo (MLMC) for the gradient of Kullback-Leibler divergence between the posterior distribution and the variational distribution when the likelihood is intractable, but can be estimated unbiasedly. The new VB algorithm differs from the VB algorithms in the literature which usually render biased gradient estimators. Moreover, we incorporate randomized quasi-Monte Carlo (RQMC) sampling within the MLMC-based gradient estimators, which was known to provide a favorable rate of convergence in numerical integration. Theoretical guarantees for RQMC are provided in this new setting.
Numerical experiments show that using RQMC in MLMC greatly speeds up the VB algorithm, and finds a better parameter value than some existing competitors do.

\end{abstract}

\begin{keywords}
Multilevel Monte Carlo, quasi-Monte Carlo, variational Bayes, intractable likelihood, nested simulation
\end{keywords}

\begin{AMS}
65C05, 62F15
\end{AMS}

\section{Introduction}
In this article, we are interested in variational Bayes (VB), which is widely used as a computationally effective method for approximating the posterior distribution of a Bayesian problem. Let $\yobs$ be the observed data and $\theta\in\mathbb{R}^p$ be the parameter of interest. The posterior distribution $p(\theta|\yobs)\propto p(\theta)p(\yobs|\theta)$, where $p(\theta)$ is the prior and $p(\yobs|\theta)$ is the likelihood function. VB approximates the posterior by a tractable distribution $q(\theta)$ within certain distribution families, chosen to minimize the Kullback-Leibler (KL) divergence between the VB distribution $q(\theta)$ and the posterior $p(\theta|\yobs)$. The optimization problem is usually solved by using the stochastic gradient decent (SGD) algorithm \cite{DJ:2018}. It calls for computing  the gradient of the KL divergence. A difficulty with SGD is that plain Monte Carlo (MC) sampling to estimate the gradient can be error prone or inefficient. Some variance reduction methods have been adopted to improve SGD \cite{MF:2017,PBJ:2012}. On the other hand, randomized quasi-Monte Carlo (RQMC) methods have been used to improve SGD in the VB setting \cite{BWM:2018}. Recently, Liu and Owen \cite{LO:2021} combined RQMC with a second order limited memory method known as L-BFGS for VB.  RQMC methods such as scrambled digital nets proposed by  \cite{Owen1995} were known to provide a favorable rate of convergence in numerical integration \cite{owen1997a}. Improved sampling accuracy translates directly to improved
optimization as shown in \cite{BWM:2018,LO:2021}.

A second difficulty with SGD is due to the absence of the likelihood function $p(\yobs|\theta)$.
In many applications, the likelihood function is intractable making it difficult to render an unbiased gradient estimator of the KL divergence. For example, the likelihood is an intractable high-dimensional integral over the state variables governed by a Markov process in state space-space models \cite{durbin:2012}. More examples can be found in the context of approximate Bayesian computation (ABC). ABC methods provide a way of approximating the posterior $p(\theta|\yobs)$ when the likelihood function is difficult to compute but it is possible to simulate data from the model \cite{peters2012,tavare1997}.

Likelihood-free inference is an active area in Bayesian computation. There are some progresses on using VB in the likelihood-free context. Barthelm{\'e} and Chopin \cite{BC:2014} used a
variational approximation algorithm known as expectation propagation in approximating ABC posteriors.  Tran et al. \cite{tran:2017} developed a new VB with intractable likelihood (VBIL) method, which can be applied to commonly used statistical models without requiring an analytical solution to model-based expectations. Ong et al. \cite{ong:2018} modified the VBIL method to work with unbiased log-likelihood estimates in the synthetic likelihood framework, resulting in the VB  synthetic likelihood (VBSL) method.

We focus on the problems in which the likelihoods are formulated as an intractable expectation. The  KL divergence turns out to be a nested expectation and so does its gradient. It is natural to use nested simulation for estimating these quantities. However, the plain nested estimator is biased. It is critical to develop unbiased gradient estimators for stochastic gradient-based optimization algorithms. To this end, we use the unbiased multilevel Monte Carlo (MLMC) proposed by \cite{Rhee2015} in the framework of  nested simulation. MLMC is a sophisticated variance reduction technique introduced by \cite{Hein1998} for parametric integration and by \cite{Giles2008} for the estimation of the expectations arising from stochastic differential equations. Nowadays MLMC methods have been extended extensively. For a thorough review of MLMC methods, we refer to \cite{Giles2015}.
Nested simulation combined with the MLMC method has been widely studied in the literature due to its broad applicability \cite{Bujok:2015,giles:2018b,GG2019,GHI:2020}.

In this paper, we develop an unbiased nested MLMC-based VB method to deal with intractable likelihoods. Our work is related to \cite{goda:2020}, who developed an unbiased MLMC stochastic gradient-based optimization method for Bayesian experimental designs.
Our proposed VB algorithm finds a better parameter value and a larger evidence lower bounded (ELBO) thanks to unbiased  gradient and ELBO estimators. This leads to a better estimate of the marginal likelihood $p(\yobs)$ compared to the VBIL method, which is an important factor in model selection. We also incorporate the RQMC sampling within the gradient and the ELBO estimators, which reduces the computational complexity effectively. Goda et al. \cite{goda:2020} worked on the MC sampling rather than RQMC. We provide some numerical analysis for both MC and RQMC settings.

The rest of this paper is organized as follows. In \Cref{se:VBIL}, we review some VB methods with intractable likelihoods, such as VBIL and VBSL, and illuminate their limitations. In \Cref{se:UMLMC}, we provide our unbiased MLMC methods for VB and discuss two different estimators of gradient, which are the score function gradient and re-parameterization gradient. In \Cref{Gaussian}, we provide the details of our algorithms when using Gaussian variational family in VB. In \Cref{se:RQMC}, we improve the algorithms by incorporating RQMC and do some numerical analysis. Finally, in \Cref{sec:num}, some numerical experiments are conducted to support the advantages of our proposed methods. Section~\ref{sec:concl} concludes this paper.

\section{Variational Bayes with an intractable likelihood}\label{se:VBIL}
Recall that our target is to estimate the posterior distribution
\begin{equation}
p(\theta|\yobs)= \frac{p(\theta)p(\yobs|\theta)}{p(\yobs)},\label{eq:post}
\end{equation}
where $p(\yobs) = \int p(\theta)p(\yobs|\theta) \mrd \theta$ is usually an  unknown constant (called the marginal likelihood or evidence).
In many applications such as state-space models and ABC, the likelihood is analytically intractable.
For these cases, the likelihood $p(\yobs|\theta)$ is usally formulated as an expectation
\begin{equation}
p(\yobs|\theta)=\mbe[f(x;\yobs)|\theta],\label{eq:intrlik}
\end{equation}
where $x\sim p(x|\theta)$ is the latent variable.

Suppose that there exists an unbiased estimator $\hat{p}_N(\yobs|\theta)$ for the intractable likelihood $p(\yobs|\theta)$ for given $\theta$, where $N$ is an algorithmic parameter relating to the precision in estimating the likelihood.  For estimating \cref{eq:intrlik}, one can take the sample-mean estimator
\begin{equation}
\hat{p}_N(\yobs|\theta) = \frac 1N\sum_{i=1}^N f(x_i;\yobs) ,\label{eq:samplemean}
\end{equation}
where $x_i$ are iid copies of $x$ for a given $\theta$.
In this paper, we restrict our attention to the sample-mean estimator \cref{eq:samplemean}. We should note that for the state-space models, the likelihood can be unbiasedly estimated by an importance sampling estimator \cite{DK:1997}, or by a particle filter estimator \cite{PSGK:2012}. The later case does not fit into our framework.

VB approximates the posterior distribution $p(\theta|\yobs)$ by a tractable density $q_\lambda(\theta)$ with a variational parameter $\lambda$, chosen to minimize the KL divergence from $q_\lambda(\theta)$ to $p(\theta|\yobs)$, which is defined by
\begin{equation*}
\kl(\lambda) = \kl(q_\lambda(\theta)||p(\theta|\yobs)) = \mathbb{E}_{q_\lambda(\theta)}[\log q_\lambda(\theta)-\log p(\theta|\yobs)].\label{eq:kl}
\end{equation*}
Using \cref{eq:post}, we have
$$\log p(\yobs) = \kl(\lambda)+ L(\lambda),$$
where $L(\lambda)$ is defined by
\begin{align*}
L(\lambda) &= \mathbb{E}_{q_\lambda(\theta)}[\log p(\yobs|\theta)+ \log p(\theta)-\log q_\lambda(\theta)].
\end{align*}
Since $\kl(\lambda)\ge 0$, $L(\lambda)$ is a lower bound of the log-evidence $\log p(\yobs)$, which is called the  ELBO. The minimization
of KL is translated to the maximization of the ELBO since the marginal likelihood $p(\yobs)$ is fixed. The problem turns out to solve
$$\lambda^* = \arg\max_{\lambda\in\Lambda} L(\lambda),$$
where $\Lambda$ is the feasible region of $\lambda$.
Stochastic gradient method and its variants are widely used to solve such a problem.
They use a sequence of steps $$\lambda^{(t+1)}= \lambda^{(t)}+\rho_t \nabla_\lambda L(\lambda^{(t)}),$$ where
$\nabla_\lambda L(\lambda)$ is  gradient of the ELBO and $\rho_t>0$ is the learning rate satisfying the Robbins-Monro
conditions: $\sum_{t=0}^\infty \rho_t=\infty$ and $\sum_{t=0}^\infty \rho_t^2<\infty$.
A simple choice is $\rho_t = a/(t+b)$ for some constants $a,b>0$. Some adaptive methods for choosing the learning rate $\rho_t$ were proposed in the literature, notably AdaGrad \cite{DHS:2011} and Adam \cite{KB:2014}.

The key in stochastic gradient methods is to estimate the gradient $\nabla_\lambda L(\lambda)$ unbiasedly. In the literature, the re-parameterization (RP) trick \cite{KW:2013} and the score function (SF) are two popular methods to derive unbiased gradient estimators. Allowing the interchange of differentiation and expectation as required in the SF method, we have
\begin{align*}
\nabla_\lambda L(\lambda) &=\nabla_\lambda \mathbb{E}_{q_\lambda(\theta)}[\log p(\yobs|\theta)+ \log p(\theta)-\log q_\lambda(\theta)]\notag\\
&= \mathbb{E}_{q_\lambda(\theta)}[\nabla_\lambda \log q_\lambda(\theta) (\log p(\yobs|\theta)+ \log p(\theta)-\log q_\lambda(\theta))],
\end{align*}
where we used the fact that $\mathbb{E}_{q_\lambda(\theta)}[\nabla_\lambda \log q_\lambda(\theta)]=0$. If the likelihood function $p(\yobs|\theta)$ is known, it is straightforward to derive an unbiased estimator for $\nabla_\lambda L(\lambda)$ by sampling  $\theta\sim q_\lambda(\theta)$ repeatedly. However, in our setting, $\log p(\yobs|\theta)$ is intractable. The question is how to use the unbiased estimator $\hat{p}_N(\yobs|\theta)$ of the likelihood to construct an  unbiased  SF estimator for $\nabla_\lambda L(\lambda)$.

On the other hand, for applying the RP trick, we assume that there exists a transformation $\theta = \Gamma(\bm u;\lambda)\sim q_\lambda(\theta)$, where the random variate $\bm u\sim p_1(\bm u)$ independently of $\lambda$. Allowing the interchange of differentiation and expectation again, we have
\begin{align}
\nabla_\lambda L(\lambda) &=\nabla_\lambda \mathbb{E}_{q_\lambda(\theta)}[\log p(\yobs|\theta)+ \log p(\theta)-\log q_\lambda(\theta)]\notag\\
&=\nabla_\lambda \mathbb{E}_{\bm u}[\log p(\yobs|\theta)+ \log p(\theta)-\log q_\lambda(\theta)]\notag\\
&=\mathbb{E}_{\bm u}[\nabla_\lambda\Gamma(\bm u;\lambda)\cdot (\nabla_\theta\log p(\yobs|\theta)+ \nabla_\theta\log p(\theta)-\nabla_\theta\log q_\lambda(\theta))],\label{eq:rep}
\end{align}
where $\nabla_\lambda\Gamma(\bm u;\lambda)$ is the Jacobian matrix with entries $[\nabla_\lambda\Gamma(\bm u;\lambda)]_{ij}=\partial\Gamma_j(\bm u;\lambda)/\partial\lambda_i$.
The RP gradient is much complicated than the SF gradient. In \cref{eq:rep}, one needs to estimate the intractable gradient of log-likelihood $\nabla_\theta\log p(\yobs|\theta)$ unbiasedly. Due to the absence of likelihood, the  SF  and RP methods for the traditional VB cannot be applied directly.

The VBIL method proposed by \cite{tran:2017} works with the augmented space $(\theta,z)$, where $z=\log \hat{p}_N(\yobs|\theta)-\log p(\theta|\yobs)$. Let $g_N(z|\theta)$ be the distribution of $z$ given $\theta$. Tran et al. \cite{tran:2017} applied the variational inference for the target distribution
$$p_N(\theta,z) = p(\theta|\yobs)\exp(z)g_N(z|\theta)$$
with a family of distributions of the form $q_\lambda(\theta,z)= q_\lambda(\theta)g_N(z|\theta)$.
The KL divergence in the augmented space is
$$\widetilde{\kl}(\lambda) = \kl(q_\lambda(\theta,z)||p_N(\theta,z)) = \mathbb{E}_{q_\lambda(\theta,z)}[\log q_\lambda(\theta)-\log p(\theta|\yobs)-z].$$
The ELBO in the augmented space is
\begin{align}
\tilde L(\lambda) &= \mathbb{E}_{q_\lambda(\theta,z)}[\log \hat p_N(\yobs|\theta)+ \log p(\theta)-\log q_\lambda(\theta)]\label{eq:elbo_aug}\\
&=L(\lambda) + \mathbb{E}_{q_\lambda(\theta,z)}[z].\notag
\end{align}
Note that
$$\mbe[z|\theta] = \mbe[\log \hat p_N(\yobs|\theta)]-\log p(\yobs|\theta)\le \log\mbe[\hat p_N(\yobs|\theta)]-\log p(\yobs|\theta)= 0$$
by using Jensen's inequality. As a result, $L(\lambda)\ge \tilde{L}(\lambda)$. The equality holds if and only if $\hat p_N(\yobs|\theta)$ is a constant with probability 1 (w.p.1).
Generally, the maximization of $\tilde L(\lambda)$ is not the same as the maximization of $L(\lambda)$ unless $\mathbb{E}_{q_\lambda(\theta,z)}[z]$ is independent of $\lambda$. Tran et al. \cite{tran:2017} made an attempt to choose $N$ as a function of $\theta$ such that $\mathbb{E}[z|\theta]\equiv\tau$ does not depend on $\theta$. By doing so, $\mathbb{E}_{q_\lambda(\theta,z)}[z]=\tau$ does not depend on $\lambda$. Hence, in practice, one needs to adapt $N$ so that the variance of the log-likelihood estimator is approximately constant with $\theta$. Ong et al. \cite{ong:2018} suggested to set some minimum value $N'$ for the initially estimating the likelihood. Then, if some target value for the log-likelihood variance is exceed based on an empirical estimate, an additional number of samples is repeatedly simulated until the target accuracy is achieved. Although the two ELBOs have the same maximizer, there is a gap (i.e., $\tau$) between the maximums of the two ELBOs. The smaller the target accuracy is, the more work is required in estimating the likelihood. Actually, $L(\lambda)$ is a locally marginalized version of $\tilde{L}(\lambda)$, which is tighter. This can help to approximate the evidence better. Furthermore, this tighter lower bound can potentially help to compute the criterion for model selection such as perplexity used in topic modeling.

In fact, if we use an unbiased estimator of $\log p(\yobs|\theta)$ to replace  $\log \hat p_N(\yobs|\theta)$ in \cref{eq:elbo_aug}, then the resulting ELBO corresponds to the original ELBO $L(\lambda)$. However, an unbiased estimator  of $\log p(\yobs|\theta)$  is not trivial.  To overcome this, \cite{ong:2018} proposed to use a synthetic likelihood. Suppose we have a summary statistic $\mathcal{S}=\mathcal{S}(\yobs)$ of dimension $d\ge p$ and the inference is based on the observed value $s$ of the summary statistic $\mathcal{S}$, which is thought to be informative about $\theta$.  Assume that the statistic $\mathcal{S}$ is exactly Gaussian conditional on each value of $\theta$, that is $p(s|\theta)=\phi(s;\mu(\theta),\Sigma(\theta))$, where $\phi$ is the density of multivariate normal with $\mu(\theta)=\mbe[\mathcal{S}|\theta]$ and $\Sigma(\theta) = \mathrm{Cov}(\mathcal{S}|\theta)$. Now the posterior density is given by
$$
p(\theta|s) \propto p(\theta)p(s|\theta) = p(\theta)\phi(s;\mu(\theta),\Sigma(\theta)).
$$
For a given $\theta$, we may simulate summary statistics $\mathcal{S}_1,\dots,\mathcal{S}_N$ under the model given $\theta$. The mean vector $\mu(\theta)$ and the covariance matrix $\Sigma(\theta)$ are then estimated by
\begin{align*}
\hat{\mu}(\theta)&=\frac 1 N \sum_{i=1}^{N} \mathcal{S}_i,\\
\hat\Sigma(\theta)&=\frac 1 {N-1} \sum_{i=1}^N (\mathcal{S}_i-\hat{\mu}(\theta))(\mathcal{S}_i-\hat{\mu}(\theta))^\top,
\end{align*}
respectively. Then an unbiased estimate of the log-synthetic likelihood $\log p(s|\theta)$ is given by
\begin{align*}
\hat\ell_N (s|\theta) = &-\frac{d}{2}\log(2\pi) - \frac{1}{2}\left\lbrace\log \abs{\hat\Sigma(\theta)}+d\log \left(\frac{N-1}{2}\right)-\sum_{i=1}^d\psi\left(\frac{N-i}{2}\right)\right\rbrace\\
&-\frac{1}{2}\left\lbrace\frac{N-d-2}{N-1}(s-\hat\Sigma(\theta))^\top\hat\Sigma(\theta)^{-1}(s-\hat\Sigma(\theta))-\frac{d}{N} \right\rbrace,
\end{align*}
where $\psi(t)= \Gamma'(t)/\Gamma(t)$ denotes the digamma function and $N>d+2$.
By replacing $\log \hat p_N(\yobs|\theta)$ with $\hat\ell_N (s|\theta)$, then $\tilde{L}(\lambda)=L(\lambda)$. However, it should be noted that the unbiasedness of $\hat\ell_N (s|\theta)$ relies heavily on the assumption of the normality of $\mathcal{S}|\theta$, and the inference is based on the information of the summary statistic $s$ rather than the full data $\yobs$.

\section{Unbiased MLMC for variational Bayes}\label{se:UMLMC}
To fix our idea, we work on the likelihood \cref{eq:intrlik} with an unbiased estimate \cref{eq:samplemean}. Now the ELBO is a nested expectation
$$
L(\lambda) = \mathbb{E}_{q_\lambda(\theta)}[\log \mbe[f(x;\yobs)|\theta]+ \log p(\theta)-\log q_\lambda(\theta)].
$$

\subsection{Score function gradient}
Applying the SF method, we reformulate the gradient as
\begin{equation*}
\nabla_\lambda L(\lambda) = \mathbb{E}_{q_\lambda(\theta)}[\nabla_\lambda \log q_\lambda(\theta) (\log \mbe[f(x;\yobs)|\theta]+ \log p(\theta)-\log q_\lambda(\theta))],
\end{equation*}
which is a nested expectation. Define
\begin{equation}
\mathrm{SF}_{N}(\lambda) = \nabla_\lambda \log q_\lambda(\theta) [\log \hat{p}_N(\yobs|\theta)+ \log p(\theta)-\log q_\lambda(\theta)],\label{eq:nested}
\end{equation}
where $\hat{p}_N(\yobs|\theta)$ is given by \cref{eq:samplemean}, and $(\theta,x)\sim q_\lambda(\theta)p(x|\theta)$. Although $\hat{p}_N(\yobs|\theta)$ is an unbiased likelihood estimator, $\mathrm{SF}_{N}(\lambda)$ is generally biased for estimating the gradient $\nabla_\lambda L(\lambda)$. We next show how to find an unbiased estimator for the log-likelihood  by using unbiased MLMC. Let $\psi_{\theta,N}=\log \hat{p}_N(\yobs|\theta)$. It is clear that
$$\lim_{N\to\infty}\mbe[\psi_{\theta,N}|\theta]=\log p(\yobs|\theta).$$
Consider an increasing sequence $0<M_0<M_1<\cdots$ such that $M_\ell\to \infty$ as $\ell\to \infty$.
Then the following telescoping sum holds,
$$\log p(\yobs|\theta) = \mbe[ \psi_{\theta,M_0}|\theta]+\sum_{\ell= 1}^\infty\mbe[ \psi_{\theta,M_\ell}- \psi_{\theta,M_{\ell-1}}|\theta].$$
More generally, if we have a sequence of correction random variables $\Delta \psi_{\theta,\ell}$, $\ell\ge 0$ such that $\mbe[\Delta \psi_{\theta,0}|\theta]=\mbe[\psi_{\theta,M_0}|\theta]$ and for $\ell>0$,
$$\mbe[\Delta \psi_{\theta,\ell}|\theta]=\mbe[\psi_{\theta,M_\ell}-\psi_{\theta,M_{\ell-1}}|\theta],$$
then it follows that
$$\log p(\yobs|\theta) = \sum_{\ell= 0}^\infty \mbe[\Delta \psi_{\theta,\ell}|\theta].$$
Let $w_\ell>0$ satisfying $\sum_{\ell=0}^\infty w_\ell =1$, and let $I$ be an independent discrete random variable with $\mbp(I=\ell) = w_\ell$. We then have
$$\log p(\yobs|\theta)  =  \mbe\left[\frac{\Delta \psi_{\theta,I}}{w_I}\bigg|\theta\right].$$
Define
\begin{equation}
\mathrm{SF}_{\text{MLMC}}(\lambda) = \nabla_\lambda \log q_\lambda(\theta) \left[\frac{\Delta \psi_{\theta,I}}{w_I}+ \log p(\theta)-\log q_\lambda(\theta)\right],
\end{equation}\label{eq:SF}
which is unbiased for the gradient $\nabla_\lambda L(\lambda)$.
For any number of outer samples $S\ge 1$, the following gradient estimator,
\begin{equation}
\widehat{\nabla_\lambda L}^{\mathrm{SF}}(\lambda) =\frac{1}{S} \sum_{i=1}^S \mathrm{SF}_{\text{MLMC}}^{(i)}(\lambda),\label{eq:SF_esti}
\end{equation}
is unbiased, where $\mathrm{SF}_{\text{MLMC}}^{(i)}(\lambda)$ are iid copy of $\mathrm{SF}_{\text{MLMC}}(\lambda)$ for the MC sampling.

Now
$$\psi_{\theta,M_\ell} =\log \hat{p}_{M_\ell}(\yobs|\theta)= \log \left(\frac 1 {M_\ell}\sum_{i=1}^{M_\ell} f(x_i;\yobs)\right),$$
where $x_i\sim p(x|\theta)$ independently.
We take $\Delta \psi_{\theta,0}=\psi_{\theta,M_0}$. For $\ell\ge 1$, we take an antithetic coupling estimator $$\Delta \psi_{\theta,\ell}=\psi_{\theta,M_\ell}-\frac{1}{2}\left(\psi_{\theta,M_{\ell-1}}^{(a)}+\psi_{\theta,M_{\ell-1}}^{(b)}\right),$$ where
$$\psi_{\theta,M_{\ell-1}}^{(a)} = \log \left(\frac 1 {M_{\ell-1}}\sum_{i=1}^{M_{\ell-1}} f(x_i;\yobs)\right),\ \psi_{\theta,M_{\ell-1}}^{(b)} = \log \left(\frac 1 {M_{\ell-1}}\sum_{i=M_{\ell-1}+1}^{M_\ell} f(x_i;\yobs)\right).$$
The strategy of antithetic coupling is widely used in the MLMC literature \cite{GZ:2014,GHI:2020}, which yields a better rate of convergence for smooth functions. Denote $C_\ell$ as the expected cost of computing $\Delta \psi_{\theta,\ell}$, which is proportional to $M_\ell$. To ensure a finite variance and finite expected computational cost of $\mathrm{SF}_{\text{MLMC}}(\lambda)$, it is required that
\begin{equation}\label{eq:finiteCondition}
\sum_{\ell=0}^\infty \frac{\mbe[\Delta \psi_{\theta,\ell}^2||\nabla_\lambda \log q_\lambda(\theta)||_2^2]}{w_\ell}<\infty\text{ and }\sum_{\ell=0}^{\infty}C_\ell w_\ell<\infty.
\end{equation}
In this paper, we take $M_\ell=M_02^\ell$ for some $M_0\ge 1$ and all $\ell\ge 0$, implying $C_\ell =O(2^\ell)$. Assume that $\mbe[\Delta \psi_{\theta,\ell}^2||\nabla_\lambda \log q_\lambda(\theta)||_2^2] = O(2^{-r\ell})$ for some $r>1$. Let $w_\ell = w_0 2^{-\alpha\ell}$ for $w_0=1-2^{-\alpha}$ and $\alpha>0$.  Then \cref{eq:finiteCondition} holds if we take $\alpha\in (1,r)$. The expected computational cost is then proportional to
\begin{equation}
C(\alpha,M_0)= \sum_{\ell=0}^\infty M_\ell w_\ell = \sum_{\ell=0}^\infty M_0w_02^{(1-\alpha)\ell}=\left(1+\frac{1}{2^{\alpha}-2}\right)M_0. \label{eq:cost}
\end{equation}


\begin{lem}\label{lem:1}
	Let $X$ be a random variable with zero mean, and let $\bar{X}_N$ be an average of $N$ iid samples of $X$. If $\mbe[\abs{X}^p]<\infty$ for $p>2$, then there exists a constant $C_p$ depending only on $p$ such that
	$$\mbe[\abs{\bar{X}_N}^p]\le C_p \frac{\mbe[\abs{X}^p]}{N^{p/2}}.$$
\end{lem}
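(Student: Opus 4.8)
The plan is to recognize this as (a reformulation of) the Marcinkiewicz--Zygmund moment inequality and to carry it out in three short steps. Set $S_N=\sum_{i=1}^N X_i$, so that $\bar X_N=S_N/N$ and $\mbe[\abs{\bar X_N}^p]=N^{-p}\mbe[\abs{S_N}^p]$. Hence it suffices to prove $\mbe[\abs{S_N}^p]\le B_p\,N^{p/2}\,\mbe[\abs{X}^p]$ for some constant $B_p=B_p(p)$, after which the claim follows with $C_p=B_p$ upon dividing by $N^p$.

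First I would apply the Marcinkiewicz--Zygmund inequality to the independent, mean-zero summands $X_1,\dots,X_N$: there is a constant $B_p$ depending only on $p$ such that
$$\mbe[\abs{S_N}^p]\le B_p\,\mbe\!\left[\Big(\sum_{i=1}^N X_i^2\Big)^{p/2}\right].$$
If a self-contained derivation is wanted instead of a citation, this follows from a standard symmetrization (replace $X_i$ by $X_i-X_i'$ with $X_i'$ an independent copy; conditional Jensen applies since $\mbe[S_N]=0$, at the cost of a factor $\mbe\abs{X-X'}^p\le 2^p\mbe\abs{X}^p$) followed by Khintchine's inequality applied conditionally on $(\abs{X_i-X_i'})_i$, which again only injects a $p$-dependent constant. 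Either way, only the finiteness of $\mbe[\abs{X}^p]$ is used.

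Next I would bound the right-hand side using $p/2\ge1$. Since $X_i^2\ge 0$, Minkowski's inequality in $L^{p/2}$ gives
$$\left(\mbe\!\left[\Big(\sum_{i=1}^N X_i^2\Big)^{p/2}\right]\right)^{2/p}\le \sum_{i=1}^N\left(\mbe\big[\abs{X_i}^{p}\big]\right)^{2/p}=N\left(\mbe[\abs{X}^p]\right)^{2/p},$$
because $\mbe[(X_i^2)^{p/2}]=\mbe[\abs{X}^p]$ by the iid assumption. Raising to the power $p/2$ yields $\mbe[(\sum_i X_i^2)^{p/2}]\le N^{p/2}\mbe[\abs{X}^p]$, and combining with the previous display gives $\mbe[\abs{S_N}^p]\le B_p N^{p/2}\mbe[\abs{X}^p]$, which completes the argument.

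I expect no genuine obstacle here; the only non-elementary ingredient is the Marcinkiewicz--Zygmund (equivalently Rosenthal, or Khintchine-after-symmetrization) inequality, which is classical. It is worth noting where the hypothesis $p>2$ enters: it makes $t\mapsto t^{p/2}$ convex, so that Minkowski applies in the last step and produces the $N^{p/2}$ scaling; for $1\le p\le 2$ one would instead use subadditivity of $t\mapsto t^{p/2}$ and obtain the weaker decay rate $N^{-(p-1)}$.
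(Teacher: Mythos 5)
Your proof is correct: the reduction to $\mbe[\abs{S_N}^p]$, the Marcinkiewicz--Zygmund inequality, and Minkowski in $L^{p/2}$ (where $p>2$ is used) give exactly the claimed $N^{-p/2}$ decay with a constant depending only on $p$. The paper itself offers no proof, only a citation to Lemma~1 of \cite{GG2019}, and your argument is the standard one underlying that reference, so there is nothing substantive to compare.
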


\Cref{lem:1} is stated as Lemma~1 in \cite{GG2019}, with which we have the following theorem.

\begin{theorem}\label{thm:sfmc}
	Suppose that $f(x;\yobs)>0$ w.p.1, and there exist $p,q>2$ with $(p-2)(q-2)>4$ such that
	$$\mbe\left[\abs{\frac{f(x;\yobs)}{p(\yobs|\theta)}}^p\right]<\infty \text{ and }\mbe\left[\left(1+\abs{\log \frac{f(x;\yobs)}{p(\yobs|\theta)}}^q\right)||\nabla_\lambda \log q_\lambda(\theta)||_2^q\right]<\infty,$$
	where the expectations are taken with respect to $(\theta,x)\sim q_\lambda(\theta)p(x|\theta)$, then $$\mbe[\Delta \psi_{\theta,\ell}^2||\nabla_\lambda \log q_\lambda(\theta)||_2^2] = O(2^{-r\ell}) \text{ with } r=\min\left(\frac{p(q-2)}{2q},2\right)\in(1,2].$$
\end{theorem}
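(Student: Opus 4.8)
My plan is to normalise so that everything is expressed through $g:=f(x;\yobs)/p(\yobs\mid\theta)$, to derive an exact formula plus two elementary bounds for the antithetic difference, and then to split the expectation over a ``good'' event where both half-sample averages sit near their common mean and a ``bad'' event where one does not. Write $W:=\|\nabla_\lambda\log q_\lambda(\theta)\|_2$ (the set $\{W=0\}$ contributes nothing, so assume $W>0$), put $\mu:=p(\yobs\mid\theta)$ and $R_i:=f(x_i;\yobs)/\mu>0$, so that given $\theta$ the $R_i$ are i.i.d.\ with mean $1$ and, by hypothesis, $\mbe[R^p\mid\theta]<\infty$ and $\mbe[|\log R|^q\mid\theta]<\infty$ for a.e.\ such $\theta$. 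For $\ell\ge1$ the constant $\log\mu$ cancels in the antithetic difference, so, writing $\bar R_a,\bar R_b$ for the two half-sample averages (each over $M_{\ell-1}=M_\ell/2$ of the $R_i$),
\[
\Delta\psi_{\theta,\ell}=\log\tfrac{\bar R_a+\bar R_b}{2}-\tfrac12\log\bar R_a-\tfrac12\log\bar R_b=-\tfrac12\log\!\Big(1-\big(\tfrac{\bar R_a-\bar R_b}{\bar R_a+\bar R_b}\big)^{2}\Big)\ge0 .
\]
From $-\log(1-u)\le u/(1-u)$ on $[0,1)$ and the AM--GM sandwich $\tfrac12(\log\bar R_a+\log\bar R_b)\le\log\tfrac{\bar R_a+\bar R_b}{2}\le\max(\log\bar R_a,\log\bar R_b)$ I would record
\[
0\le\Delta\psi_{\theta,\ell}\le\frac{(\bar R_a-\bar R_b)^2}{8\,\bar R_a\bar R_b}\qquad\text{and}\qquad 0\le\Delta\psi_{\theta,\ell}\le\tfrac12\,|\log\bar R_a-\log\bar R_b| ,
\]
and set the critical exponent $\beta:=\min\{4,\ p(q-2)/q\}$, which lies in $(2,4]$ precisely because $(p-2)(q-2)>4$; note $\beta/2=r$ and $(\beta/p)(q/(q-2))\le1$.

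On the good event $E:=\{\bar R_a,\bar R_b\in[\tfrac12,\tfrac32]\}$ the first bound gives $\Delta\psi_{\theta,\ell}\le\tfrac12(\bar R_a-\bar R_b)^2$ and $|\bar R_a-\bar R_b|\le1$, hence $\Delta\psi_{\theta,\ell}^2\mathbf 1_E\le\tfrac14|\bar R_a-\bar R_b|^{\beta}$. Since $\bar R_a-\bar R_b=M_{\ell-1}^{-1}\sum_i(R_i-R_{M_{\ell-1}+i})$ is an average of i.i.d.\ mean-zero summands with finite $\beta$-th moment ($\beta\le p$), \Cref{lem:1} gives $\mbe[|\bar R_a-\bar R_b|^{\beta}\mid\theta]\le C\,M_{\ell-1}^{-\beta/2}\,\mbe[R^p\mid\theta]^{\beta/p}$, and a Hölder inequality over $\theta$ with conjugate exponents $(q/2,\,q/(q-2))$ then bounds $\mbe[W^2|\bar R_a-\bar R_b|^{\beta}]$ by a constant times $\mbe[W^q]^{2/q}\mbe\big[\mbe[R^p\mid\theta]^{(\beta/p)(q/(q-2))}\big]^{(q-2)/q}$: the first factor is finite because $\mbe[W^q]\le\mbe[(1+|\log(f/\mu)|^q)W^q]<\infty$, and the second is $\le1+\mbe[(f/\mu)^p]<\infty$ since $(\beta/p)(q/(q-2))\le1$. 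Hence $\mbe[\Delta\psi_{\theta,\ell}^2W^2\mathbf 1_E]=O(M_{\ell-1}^{-\beta/2})=O(2^{-r\ell})$.

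On $E^c=E_a^c\cup E_b^c$ (with $E_a:=\{\bar R_a\in[\tfrac12,\tfrac32]\}$ and $E_b$ analogous) the second bound gives $\Delta\psi_{\theta,\ell}^2\mathbf 1_{E^c}\le\tfrac12\big((\log\bar R_a)^2+(\log\bar R_b)^2\big)(\mathbf 1_{E_a^c}+\mathbf 1_{E_b^c})$, and exchangeability of $(\bar R_a,\bar R_b)$ given $\theta$ (with $W$ a function of $\theta$ alone) reduces matters to $\mbe[(\log\bar R_a)^2W^2\mathbf 1_{E_a^c}]$ and $\mbe[(\log\bar R_b)^2W^2\mathbf 1_{E_a^c}]$. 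For the diagonal term I would split on $\{\bar R_a>\tfrac32\}$, where $(\log\bar R_a)^2\le(\bar R_a-1)^2$ and $(\bar R_a-1)^2\mathbf 1_{|\bar R_a-1|>1/2}\le2^{\beta-2}|\bar R_a-1|^{\beta}$, so \Cref{lem:1} and the same $\theta$-Hölder give $O(M_{\ell-1}^{-\beta/2})$; and on $\{\bar R_a<\tfrac12\}$, where concavity of $\log$ gives $|\log\bar R_a|\le M_{\ell-1}^{-1}\sum_i|\log R_i|$ and hence $(\log\bar R_a)^2\mathbf 1_{\bar R_a<1/2}\le\big(M_{\ell-1}^{-1}\sum_i|\log R_i|^2\big)\mathbf 1_{\bar R_a<1/2}$, a conditional Hölder with exponents $(q/2,\,q/(q-2))$ peels off $\mbp(\bar R_a<\tfrac12\mid\theta)^{(q-2)/q}=O(M_{\ell-1}^{-r})$ (Markov plus \Cref{lem:1} with exponent $p$, using $\tfrac p2\cdot\tfrac{q-2}{q}=r$), leaving $\mbe[|\log R|^q\mid\theta]^{2/q}\mbe[|R-1|^p\mid\theta]^{(q-2)/q}$, which pairs against $W^2$ by a $\theta$-Hölder landing on $\mbe[(1+|\log(f/\mu)|^q)W^q]<\infty$ and $\mbe[|f/\mu-1|^p]<\infty$. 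For the cross term, on $E_b$ one has $(\log\bar R_b)^2\le(\log2)^2$ and $\mbp(E_a^c\mid\theta)=O(M_{\ell-1}^{-\beta/2})$ (as in the good-event bound), while on $E_b^c$ one discards $\mathbf 1_{E_a^c}$ and is reduced to the already-estimated diagonal term with $a,b$ swapped. Hence $\mbe[\Delta\psi_{\theta,\ell}^2W^2\mathbf 1_{E^c}]=O(2^{-r\ell})$.

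For $\ell=0$, $\Delta\psi_{\theta,0}=\log\hat p_{M_0}(\yobs\mid\theta)=\log\mu+\log\bar R_{M_0}$, so $\mbe[\Delta\psi_{\theta,0}^2W^2]\le2\mbe[(\log\mu)^2W^2]+2\mbe[(\log\bar R_{M_0})^2W^2]$; the second term is finite by the estimates above with the fixed $M_0$ in place of $M_{\ell-1}$, and the first by the standing regularity assumptions of the variational problem, so $\mbe[\Delta\psi_{\theta,0}^2W^2]=O(1)$. Summing the finitely many pieces gives $\mbe[\Delta\psi_{\theta,\ell}^2\,\|\nabla_\lambda\log q_\lambda(\theta)\|_2^2]=O(2^{-r\ell})$. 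I expect the hard part to be not any single estimate but the moment bookkeeping: the first hypothesis controls only $\mbe_\theta\big[\mbe[(f/\mu)^p\mid\theta]\big]$, a single moment in $\theta$, so every Hölder split must be arranged so that $\mbe[(f/\mu)^p\mid\theta]$ enters at power exactly $1$ while $\|\nabla_\lambda\log q_\lambda(\theta)\|_2^q$ enters at power exactly $q$ against the joint log-moment hypothesis — and this is precisely what forces the exponent $\beta=\min\{4,p(q-2)/q\}$, hence the stated $r$; a secondary delicacy is that $\log$ of a sample mean must be controlled by routing its lower tail (small $\bar R_a$) through the concavity bound and the $q$-th log-moment and its upper tail (large $\bar R_a$) through $\log x\le x-1$ and the $p$-th moment.
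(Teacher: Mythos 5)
Your argument is correct and arrives at the stated rate, but by a genuinely different route from the paper's. The paper exploits the antithetic cancellation by writing $\Delta\psi_{\theta,\ell}$ as a signed combination of the three quantities $\log R - R + 1$ (the linear parts telescope to zero), and then a single interpolating inequality $\abs{\log x - x + 1}\le \abs{x-1}^r\max(-\log x,1)$, one H\"older application with the same exponents $(q/2,\,q/(q-2))$ that you use, \Cref{lem:1} for the centered $2rs$-th moment, and Jensen's inequality for the convex function $\max(-\log x,1)^{2t}$ of a sample mean. You instead exploit the cancellation through the exact identity $\Delta\psi_{\theta,\ell}=-\tfrac12\log\bigl(1-((\bar R_a-\bar R_b)/(\bar R_a+\bar R_b))^2\bigr)\ge 0$ — a structural observation (nonnegativity of the correction) the paper never makes — and then run a good-event/bad-event decomposition, sending the upper tail of the sample mean through $\log x\le x-1$ and the $p$-th moment and the lower tail through concavity of $\log$ and the $q$-th log-moment. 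The two proofs are morally parallel: both rest on \Cref{lem:1} applied to centered sample means, Markov/H\"older with conjugate exponents $q/2$ and $q/(q-2)$, and Jensen for the logarithm of a sample mean, and both are forced to the same exponent $r=\min(p(q-2)/(2q),2)$ by the same moment bookkeeping you describe. What the paper's route buys is compactness — its one inequality packages your entire case analysis; what your route buys is transparency about where each hypothesis is consumed and an explicit sign for $\Delta\psi_{\theta,\ell}$. Two small remarks: \Cref{lem:1} should be applied conditionally on $\theta$ (the law of $R_i$ depends on $\theta$) before integrating out, a point on which the paper's own write-up is equally casual; and your $\ell=0$ term requires $\mbe[(\log p(\yobs|\theta))^2\,\|\nabla_\lambda\log q_\lambda(\theta)\|_2^2]<\infty$, which is not among the stated hypotheses — but this is immaterial for the asymptotic $O(2^{-r\ell})$ claim, and the paper does not treat $\ell=0$ at all.
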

\begin{proof}
	This proof is in line with Theorem 2 of \cite{GHI:2020}, which developed MLMC for a nested expectation of the form $\mbe_{X,Y}[\log[g(X,Y)|Y]]$. Let
	$$R = \frac 1 {M_{\ell}}\sum_{i=1}^{M_{\ell}} \frac{f(x_i;\yobs)}{p(\yobs|\theta)},$$
	$$R^{(a)} = \frac 1 {M_{\ell-1}}\sum_{i=1}^{M_{\ell-1}} \frac{f(x_i;\yobs)}{p(\yobs|\theta)},\ R^{(b)} =\frac 1 {M_{\ell-1}}\sum_{i=M_{\ell-1}+1}^{M_\ell} \frac{f(x_i;\yobs)}{p(\yobs|\theta)}.$$
	We then have
	$$\Delta \psi_{\theta,\ell} = (\log R-R+1)-\frac{1}{2}\left[(\log R^{(a)}-R^{(a)}+1)+(\log R^{(b)}-R^{(b)}+1)\right].$$
	Applying Jensen's inequality gives
	$$\Delta \psi_{\theta,\ell}^2\le 2(\log R-R+1)^2+(\log R^{(a)}-R^{(a)}+1)^2+(\log R^{(b)}-R^{(b)}+1)^2.$$

	Note that $|\log x-x+1|\le |x-1|^r\max(-\log x,1)$ for any $x>0$ and any $1< r\le 2$.  By Holder's inequality, we have
	\begin{align*}
	\mbe[(\log R-R+1)^2&||\nabla_\lambda \log q_\lambda(\theta)||_2^2]\le \mbe[(R-1)^{2r}\max(-\log R,1)^2||\nabla_\lambda \log q_\lambda(\theta)||_2^2]\\
	&\le \mbe[(R-1)^{2rs}]^{1/s}\mbe[\max(-\log R,1)^{2t}||\nabla_\lambda \log q_\lambda(\theta)||_2^{2t}]^{1/t}
	\end{align*}
	for any $s,t\ge 1$ satisfying $1/s+1/t=1$.
	
	Note that $\mbe[R-1]=0$. Hence, if $2rs\le p$, then it follows from \Cref{lem:1} that
	$$\mbe[(R-1)^{2rs}]\le \frac{C_{2sr}}{M_\ell^{sr}}\mbe\left[|f(x;\yobs)/p(\yobs|\theta)-1|^{2rs}\right],$$
	where $\mbe\left[|f(x;\yobs)/p(\yobs|\theta)-1|^{2rs}\right]<\infty$.
	Notice that the function $\max(-\log x,1)^{2t}$ is convex for $x>0$. Thus, applying Jensen's inequality and using $f(x_i;\yobs)>0$, we have
	\begin{align*}
	\max(-\log R,1)^{2t}&=\max\left(-\log \frac 1 {M_{\ell}}\sum_{i=1}^{M_{\ell}} \frac{f(x_i;\yobs)}{p(\yobs|\theta)},1\right)^{2t}\\
	&\le\frac 1 {M_{\ell}}\sum_{i=1}^{M_{\ell}} \max\left(-\log  \frac{f(x_i;\yobs)}{p(\yobs|\theta)},1\right)^{2t}\\
	&\le 1+\frac 1 {M_{\ell}}\sum_{i=1}^{M_{\ell}}\abs{\log  \frac{f(x_i;\yobs)}{p(\yobs|\theta)}}^{2t}.
	\end{align*}

	As a result, as long as $2t\le q$, we have
	\begin{equation}\label{eq:Jensen}
    \begin{aligned}
	&\mbe[\max(-\log R,1)^{2t}||\nabla_\lambda \log q_\lambda(\theta)||_2^{2t}]\\
    &\le \mbe\left[\left(1+\abs{\log  \frac{f(x;\yobs)}{p(\yobs|\theta)}}^{2t}\right)||\nabla_\lambda \log q_\lambda(\theta)||_2^{2t}\right]<\infty.
    \end{aligned}
	\end{equation}

	Particularly, we take $s=q/(q-2)$, $t=q/2$ and $r=\min(p(q-2)/(2q),2)$. Since $(p-2)(q-2)> 4$, $r> 1$. Therefore, $\mbe[(\log R-R+1)^2||\nabla_\lambda \log q_\lambda(\theta)||_2^2]=O(M_\ell^{-r})$. This argument holds also by replacing $R$ with $R^{(a)}$ or $R^{(b)}$. We thus have $\mbe[\Delta \psi_{\theta,\ell}^2||\nabla_\lambda \log q_\lambda(\theta)||_2^2] = O(M_\ell^{-r}) = O(2^{-r\ell})$.
\end{proof}

It should be noticed that \Cref{thm:sfmc} requires $f(x;\yobs)>0$ w.p.1.  If not, the inequalities in \cref{eq:Jensen} do not hold. This implies that our result rules out the case of  indicator functions in formulating likelihoods.

\subsection{Re-parameterization gradient}
Assume that there exists a transformation $x= \Lambda(\bm v;\theta)\sim p(x|\theta)$, where $\bm v\sim p_2(\bm v)$ independently of $\theta$ and $\nabla_{\theta} \Lambda(\bm v;\theta)$ exists. Using $\theta=\Gamma(\bm u;\lambda)$ as before gives  $x=\Lambda(\bm v;\Gamma(\bm u;\lambda))$.
Allowing the interchange of expectation and differentiation, the gradient \cref{eq:rep} is then rewritten as
\begin{align*}
&\nabla_\lambda L(\lambda) =\mathbb{E}_{\bm u}[\nabla_\lambda \Gamma(\bm u;\lambda)\cdot (\nabla_\theta\log \mbe_{x}[f(x;\yobs)]+ \nabla_\theta\log p(\theta)-\nabla_\theta\log q_\lambda(\theta))]\notag\\
&=\mathbb{E}_{\bm u}[\nabla_\lambda \Gamma(\bm u;\lambda)\cdot (\nabla_\theta\log \mbe_{\bm v}[f(x;\yobs)]+ \nabla_\theta\log p(\theta)-\nabla_\theta\log q_\lambda(\theta))]\notag\\
&=\mathbb{E}_{\bm u}\left[\nabla_\lambda \Gamma(\bm u;\lambda)\cdot \left(\frac{\mbe_{\bm v}[\nabla_\theta f(x;\yobs)]}{\mbe_{\bm v}[f(x;\yobs)]}+ \nabla_\theta\log p(\theta)-\nabla_\theta\log q_\lambda(\theta)\right)\right]\notag\\
&=\mathbb{E}_{\bm u}\left[\nabla_\lambda \Gamma(\bm u;\lambda)\cdot \left(\frac{\mbe_{\bm v}[\nabla_\theta \Lambda(\bm v;\theta)\nabla_xf(x;\yobs)]}{\mbe_{\bm v}[f(x;\yobs)]}+ \nabla_\theta\log p(\theta)-\nabla_\theta\log q_\lambda(\theta)\right)\right],\label{eq:RP}
\end{align*}
where $\nabla_\lambda\Gamma(\bm u;\lambda)$ is the Jacobian matrix with entries $[\nabla_\lambda\Gamma(\bm u;\lambda)]_{ij}=\partial\Gamma_j(\bm u;\lambda)/\partial\lambda_i$.

Define
\begin{equation}
\mathrm{RP}_{N}(\lambda) =\nabla_\lambda \Gamma(\bm u;\lambda)\cdot \left(\frac{\nabla_\theta \hat{p}_N(\yobs|\theta)}{\hat{p}_N(\yobs|\theta)}+ \nabla_\theta\log p(\theta)-\nabla_\theta\log q_\lambda(\theta)\right),\label{eq:nestedRP}
\end{equation}
where
$$\hat{p}_N(\yobs|\theta) = \frac{1}{N}\sum_{i=1}^Nf(x_i;\yobs)\text{ with }x_i=\Lambda(\bm v_i;\theta),$$
$$\nabla_\theta\hat{p}_N(\yobs|\theta) = \frac{1}{N}\sum_{i=1}^N\nabla_\theta f(x_i;\yobs)=\frac{1}{N}\sum_{i=1}^N\nabla_\theta \Lambda(\bm v_i;\theta)\nabla_x f(x_i;\yobs),$$
with $[\nabla_\theta \Lambda(\bm v;\theta)]_{ij}=\partial \Lambda_j(\bm v;\theta)/\partial \theta_i$ and $\bm v_i\sim p_2(\bm v)$ independently.
The estimator \eqref{eq:nestedRP} is also biased. Now we take
$$\tilde{\psi}_{\theta,M_\ell}=\frac{\nabla_\theta\hat{p}_{M_\ell}(\yobs|\theta)}{\hat{p}_{M_\ell}(\yobs|\theta)}=\frac{\sum_{i=1}^{M_\ell}\nabla_\theta \Lambda(\bm v_i;\theta)\nabla_x f(x_i;\yobs)}{\sum_{i=1}^{M_\ell}f(x_i;\yobs)},$$
to differ from $\psi_{\theta,M_\ell}$ in the SF method.
Analogously, we take $\Delta \tilde \psi_{\theta,0}=\tilde\psi_{\theta,M_0}$. For $\ell\ge 1$, we use an antithetic coupling estimator again
\begin{equation}\label{eq:RPdelta}
\Delta \tilde\psi_{\theta,\ell}=\tilde\psi_{\theta,M_\ell}-\frac{1}{2}\left(\tilde\psi_{\theta,M_{\ell-1}}^{(a)}+\tilde\psi_{\theta,M_{\ell-1}}^{(b)}\right),
\end{equation}
where
\begin{align*}
&\tilde\psi_{\theta,M_{\ell-1}}^{(a)} = \frac{\sum_{i=1}^{M_{\ell-1}}\nabla_\theta \Lambda(\bm v_i;\theta)\nabla_x f(x_i;\yobs)}{\sum_{i=1}^{M_{\ell-1}}f(x_i;\yobs)},\\ &\tilde\psi_{\theta,M_{\ell-1}}^{(b)} = \frac{\sum_{i=M_{\ell-1}+1}^{M_{\ell}}\nabla_\theta \Lambda(\bm v_i;\theta)\nabla_x f(x_i;\yobs)}{\sum_{i=M_{\ell-1}+1}^{M_{\ell}}f(x_i;\yobs)}.
\end{align*}
Define
\begin{equation}\label{eq:rp}
\mathrm{RP}_{\text{MLMC}}(\lambda) = \nabla_\lambda \Gamma(\bm u;\lambda)\cdot \left(\frac{\Delta \tilde\psi_{\theta,I}}{w_I}+ \nabla_\theta\log p(\theta)-\nabla_\theta\log q_\lambda(\theta)\right),
\end{equation}
where $\theta = \Gamma(\bm u;\lambda)$ and $w_I$ is defined as in the SF method. For any number of outer samples $S\ge 1$, the gradient estimator
$$\widehat{\nabla_\lambda L}^{\mathrm{RP}}(\lambda) =\frac{1}{S} \sum_{i=1}^S \mathrm{RP}_{\text{MLMC}}^{(i)}(\lambda),$$
is unbiased, where $\mathrm{RP}_{\text{MLMC}}^{(i)}(\lambda)$ are iid copy of $\mathrm{RP}_{\text{MLMC}}(\lambda)$.

Similarly, to ensure a finite variance and finite expected computational cost of $\mathrm{RP}_{\text{MLMC}}(\lambda)$, it suffices to show $\mbe[||\nabla_\lambda \Gamma(\bm u;\lambda)\cdot\Delta \tilde \psi_{\theta,\ell}||_2^2] = O(2^{-r\ell})$ for some $r>1$. This can be achieved as shown in the following theorem.

\begin{theorem}\label{thm:remc}
	If
	$$\sup_{x} ||\nabla_\lambda \log f(x;\yobs))||_\infty<\infty,$$
	where $x=\Lambda(\bm v;\Gamma(\bm u;\lambda))$, and assume that there exists $p>2$ such that
	$$\mbe\left[\abs{\frac{f(x,\yobs)}{p(\yobs|\theta)}}^p\right]<\infty,$$
	then $$\mbe[||\nabla_\lambda \Gamma(\bm u;\lambda)\Delta \tilde \psi_{\theta,\ell}||_2^2] = O(2^{-r\ell}) \text{ with } r=\min(p/2,2)\in(1,2].$$
\end{theorem}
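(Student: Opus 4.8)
The plan is to follow the ratio-estimator analysis that underlies \Cref{thm:sfmc} and \cite{GHI:2020}, taking advantage of the extra structure that $\nabla_\lambda\log f$ is bounded. First I would rewrite the inner quantity as a weighted average. Put $\rho_i=f(x_i;\yobs)/p(\yobs|\theta)$ and $G_i=\nabla_\lambda\log f(x_i;\yobs)$ with $x_i=\Lambda(\bm v_i;\Gamma(\bm u;\lambda))$; the chain rule gives $\nabla_\lambda\Gamma(\bm u;\lambda)\cdot\nabla_\theta f(x_i;\yobs)=\nabla_\lambda f(x_i;\yobs)=f(x_i;\yobs)G_i$, so
$$\nabla_\lambda\Gamma(\bm u;\lambda)\cdot\tilde\psi_{\theta,N}=\frac{\sum_{i=1}^{N}\rho_iG_i}{\sum_{i=1}^{N}\rho_i}=:\frac{\tilde T_N}{R_N}.$$
Since $f>0$ (as in \Cref{thm:sfmc}), the right-hand side is a convex combination of the $G_i$, hence uniformly bounded in $\ell_\infty$ by $B:=\sup_x\|\nabla_\lambda\log f(x;\yobs)\|_\infty$. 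Next, conditioning on $\bm u$ and using the interchange of differentiation and expectation licensed above, $\gamma:=\mbe[\rho G|\bm u]=\nabla_\lambda\log p(\yobs|\theta)$ is also bounded by $B$; with $V_N:=\tilde T_N-\gamma R_N=\frac1N\sum_i\rho_i(G_i-\gamma)$ (centred given $\bm u$), one gets $\nabla_\lambda\Gamma\cdot\tilde\psi_{\theta,N}-\gamma=V_N/R_N$.

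Then I would insert the antithetic structure. Let $R_a,R_b$ and $V_a,V_b$ be the block averages of $\rho_i$ and of $\rho_i(G_i-\gamma)$ over the two halves of size $M_{\ell-1}$, so that $R_{M_\ell}=(R_a+R_b)/2$ and $V_{M_\ell}=(V_a+V_b)/2$. A direct computation shows
$$\xi:=\nabla_\lambda\Gamma(\bm u;\lambda)\cdot\Delta\tilde\psi_{\theta,\ell}=\frac{R_a-R_b}{2(R_a+R_b)}\Bigl(\frac{V_a}{R_a}-\frac{V_b}{R_b}\Bigr)=:\frac{R_a-R_b}{2(R_a+R_b)}\,W,$$
where $W=\nabla_\lambda\Gamma\cdot\tilde\psi^{(a)}_{\theta,M_{\ell-1}}-\nabla_\lambda\Gamma\cdot\tilde\psi^{(b)}_{\theta,M_{\ell-1}}$ is a difference of two convex combinations of the $G_i$, hence $\|W\|_2\le B'$ unconditionally for some finite $B'$; moreover on $E:=\{R_a>1/2\}\cap\{R_b>1/2\}$ we also have $\|W\|_2\le 2\|V_a\|_2+2\|V_b\|_2$.

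Finally I would derive the bound from \Cref{lem:1} and Hölder's inequality, working conditionally on $\bm u$. Since $\mbe[|\rho|^p]<\infty$, $p>2$, and $\mbe[\rho|\bm u]=1$, \Cref{lem:1} gives $\mbe[|R_a-1|^p]=O(M_{\ell-1}^{-p/2})$, hence $\mbp(E^c)=O(M_{\ell-1}^{-p/2})$ by Markov's inequality, and $\mbe[|R_a-R_b|^{2s}]=O(M_{\ell-1}^{-s})$ for $1<s\le p/2$; likewise, since $\|G_i-\gamma\|_\infty\le 2B$, the summands of $V_a$ are centred with finite $p$-th moment and \Cref{lem:1} yields $\mbe[\|V_a\|_2^{m}]=O(M_{\ell-1}^{-m/2})$ for $2<m\le p$. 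From the displayed identity,
$$\mbe[\|\xi\|_2^2]\le\tfrac14\,\mbe\Bigl[\tfrac{|R_a-R_b|^{2s}}{(R_a+R_b)^{2s}}\Bigr]^{1/s}\mbe\bigl[\|W\|_2^{2t}\bigr]^{1/t}$$
for conjugate exponents $s,t>1$. Using $\frac{|R_a-R_b|}{R_a+R_b}\le1$ always and $\le|R_a-R_b|$ on $E$, the first factor is $O(M_{\ell-1}^{-1})$ whenever $s\le p/2$; the second factor is handled by splitting on $E$ and $E^c$ and, when $2t>p$, trading moments via $\|W\|_2^{2t}\le(B')^{2t-p}\|W\|_2^p$, giving $\mbe[\|W\|_2^{2t}]=O(M_{\ell-1}^{-\min(t,p/2)})$. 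Choosing $s=t=2$ when $p\ge4$ and $s=p/2$, $t=p/(p-2)$ when $2<p<4$ then yields $\mbe[\|\xi\|_2^2]=O(M_{\ell-1}^{-\min(p/2,2)})=O(2^{-r\ell})$ (the case $\ell=0$ being immediate from the uniform bound on $\nabla_\lambda\Gamma\cdot\tilde\psi_{\theta,M_0}$). I expect the delicate point to be the range $2<p<4$, where no $L^4$ control of $R_a-R_b$ or of $V_a$ is available, so the sharp exponent $p/2$ is recovered only by matching the Hölder exponents to $p$ while exploiting the unconditional boundedness of $W$; controlling the small-denominator event $E^c$ via \Cref{lem:1}, which is also the place where $f>0$ is needed, is the other step requiring care.
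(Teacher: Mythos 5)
Your argument is correct, and it supplies considerably more than the paper does: the paper's ``proof'' of \Cref{thm:remc} is a single sentence deferring to Theorem~3.1 of \cite{goda:2020}, which performs exactly this kind of antithetic ratio-of-sample-means analysis. Your route is the same in spirit --- center the ratio at $\gamma=\mbe[\rho G\mid \bm u]$, use the exact identity $\xi=\frac{R_a-R_b}{2(R_a+R_b)}\bigl(\frac{V_a}{R_a}-\frac{V_b}{R_b}\bigr)$ for the antithetic difference of ratios, control the small-denominator event with \Cref{lem:1} and Markov, and tune the H\"older exponents against $p$ --- so it is best read as a complete, self-contained version of the intended proof rather than a different one. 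The genuinely nice extra ingredient is your observation that $\nabla_\lambda\Gamma\cdot\tilde\psi_{\theta,N}=\sum_i\rho_iG_i/\sum_i\rho_i$ is a convex combination of the uniformly bounded vectors $G_i=\nabla_\lambda\log f(x_i;\yobs)$, so that $W$ is bounded unconditionally; this is what lets you trade $L^{2t}$ for $L^p$ moments in the range $2<p<4$ and still land on the sharp exponent $p/2$, and your exponent bookkeeping ($s=t=2$ for $p\ge4$; $s=p/2$, $t=p/(p-2)$ for $2<p<4$) checks out. Two points you should make explicit, though neither breaks the argument: (i) the positivity $f(x;\yobs)>0$ w.p.1, stated in \Cref{thm:sfmc} but omitted from the hypotheses of \Cref{thm:remc}, is needed both for the convex-combination bound and for the event $E$ argument, so it must be assumed; and (ii) \Cref{lem:1} is stated for scalars and for iid samples from a fixed distribution, so the bounds $\mbe[\|V_a\|_2^m]=O(M_{\ell-1}^{-m/2})$ and $\mbe[|R_a-1|^p]=O(M_{\ell-1}^{-p/2})$ should be obtained componentwise and conditionally on $\bm u$ (using $\|\rho_i(G_i-\gamma)\|_\infty\le 2B\rho_i$, which has finite conditional $p$-th moment) before integrating over $\bm u$.
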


\begin{proof}
	The proof follows an argument similar to Theorem 3.1 in \cite{goda:2020}, which considered a nested expectation involving a ratio of two inner conditional expectations.
\end{proof}

\section{Parameterizations in Gaussian variational family}\label{Gaussian}
Throughout this paper, we use the Gaussian family $N(\mu,\Sigma)$ as the variational family. For the SF method, we take the variational parameters as  $\lambda = (\mu,\vech(C))$, where $C$ is the Cholesky decomposition (lower triangular) of $\Sigma^{-1}$ and $\vech(C)$ denotes a
vector obtained by stacking the
lower triangular elements of $C$. The number of variational parameters $d_\lambda = p+p(p+1)/2$.  Since $\log q_\lambda(\theta) = \log |\det(C)|-\frac 1 2(\theta-\mu)^\top CC^\top(\theta-\mu)$, $\nabla_\lambda \log q_\lambda(\theta)=(\nabla_\mu  \log q_\lambda(\theta),\nabla_{\vech(C)} \log q_\lambda(\theta))$ with
\begin{align*}
\nabla_\mu  \log q_\lambda(\theta) &=  CC^\top(\theta-\mu),\\
\nabla_{\vech(C)} \log q_\lambda(\theta) &= \vech(\diag{1/C}-(\theta-\mu)(\theta-\mu)^\top C),
\end{align*}
where $\diag{1/C}$ denotes the diagonal matrix with the same dimensions as $C$ with $i$th diagonal entry $1/C_{ii}$. Note that the score function $\nabla_\lambda \log q_\lambda(\theta)$ is model-free. The SF estimator $\mathrm{SF}_{N}(\lambda)$ can be easily obtained by \cref{eq:nested}. It is common to use control variate (CV) to reduce the noise in estimating the  gradient \cite{MF:2017,PBJ:2012}.
Note that $\mbe[\nabla_\lambda\log q_\lambda(\theta)]=0$. For any constant vector $c=(c_1,\dots,c_p)\in\mbr^p$, the estimator is also unbiased for the gradient,
\begin{equation*}
\mathrm{SF}^{\text{CV}}_{\text{MLMC}}(\lambda,c) = \nabla_\lambda \log q_\lambda(\theta) \left[\frac{\Delta \psi_{\theta,I}}{w_I}+ \log p(\theta)-\log q_\lambda(\theta)-c\right].
\end{equation*}
We can take an optimal $c_i$ to minimize the variance of the $i$th entry of $\mathrm{SF}^{\text{CV}}_{\text{MLMC}}(\lambda,c)$. Solving
$$c^*_i = \arg \min_{c_i\in \mbr} \var{\mathrm{SF}^{\text{CV}}_{\text{MLMC},i}(\lambda,c_i)}$$
gives
\begin{equation}
c^*_i = \frac{\mbe[\left(\nabla_{\lambda_i} \log q_\lambda(\theta)\right)^2\xi]}{\mbe[\left(\nabla_{\lambda_i} \log q_\lambda(\theta)\right)^2]}=\frac{\cov{\nabla_{\lambda_i} \log q_\lambda(\theta),\nabla_{\lambda_i} \log q_\lambda(\theta)\xi}}{\var{\nabla_{\lambda_i} \log q_\lambda(\theta)}},\label{eq:cv}
\end{equation}
where $\xi=\frac{\Delta \psi_{\theta,I}}{w_I}+ \log p(\theta)-\log q_\lambda(\theta)$.
In practice, $c^*_i$ ($i=1,\dots,p$) are estimated by using the samples in the previous iteration.
The whole procedure is summarized in \Cref{alg:em1}.

\begin{algorithm}
	\caption{Unbiased MLMC with the SF gradient estimator\label{alg:em1}}
	\begin{algorithmic}[1]
		\STATE Initialize $\lambda^{(0)} = (\mu^{(0)},\vech(C^{(0)}))$, $t=0$, $M$ the number of outer samples, $\alpha\in(1,r)$ and $w_\ell\propto 2^{-\alpha \ell}$ such that $\sum_{\ell=0}^\infty w_\ell =1$ and all $w_\ell>0$.
		
		\STATE Repeat
		
	    (a) Generate $\theta^{(t)}_1,\dots,\theta^{(t)}_m\sim N(\mu^{(t)},(C^{(t)}{C^{(t)}}^\top)^{-1})$ independently and $I_1^{(t)},\dots,I_m^{(t)}$ independently and randomly with probability $w_\ell$.
		
		(b) Let $n_i = M_02^{I_i^{(t)}}$. For $i=1,\dots,m$, generate $x_{i1}^{(t)},\dots,x_{in_i}^{(t)}\sim p(x|\theta^{(t)}_i)$ independently. Compute the associated samples of the correction $\Delta \psi_{\theta,I}$, denoted by $\Delta \psi^{(t)}_i$, $i=1,\dots,m$.
		
		(c) Estimate $c^*$ defined by \eqref{eq:cv} by the samples $\theta^{(t)}_i,\ I_i^{(t)},\ \Delta \psi^{(t)}_i$, $i=1,\dots,m$, resulting in $c^{(t)}$.
		
		(d) If $t>0$, compute the gradient estimator
        \begin{align*} 		
		&\widehat{\nabla_\lambda L}^{\mathrm{SF}}(\lambda^{(t)}) \\
        = &\frac{1}{m}\sum_{i=1}^m \nabla_\lambda \log q_\lambda(\theta^{(t)}_i) \left[\frac{	\Delta \psi^{(t)}_i}{w_{I_i^{(t)}}}+ \log p(\theta^{(t)}_i)-\log q_{\lambda^{(t)}}(\theta^{(t)}_i)-c^{(t-1)}\right],
        \end{align*}
		and the ELBO estimator
		$$\mathrm{LB}(\lambda^{(t)}) = \frac{1}{S}\sum_{i=1}^S \frac{	\Delta \psi^{(t)}_i}{w_{I_i^{(t)}}}+ \log p(\theta^{(t)}_i)-\log q_{\lambda^{(t)}}(\theta^{(t)}_i).$$
		Update the VB parameter:
		$$\lambda^{(t+1)} = \lambda^{(t)}+\rho_t \widehat{\nabla_\lambda L}^{\mathrm{SF}}(\lambda^{(t)}).$$
		If $t=0$, then set $\lambda^{(t+1)} = \lambda^{(t)}$. Note that this step is used to initialize $c^*$ rather than updating the VB parameter.
		
		(e) $t = t+1$
		
		until some stopping rule is satisfied.
\end{algorithmic}
\end{algorithm}

Using the RP method, we take the variational parameter as  $\lambda = (\mu,\vech(L))$, where $L$ is the Cholesky decomposition of $\Sigma$, which is different from the parameterizations in the SF method. For this case, $\theta =  \Gamma(\bm u;\lambda) = \mu+L\bm u \sim N(\mu,\Sigma)$, where $\bm u\in \mathbb{R}^{p\times 1}$ is a standard normal. Let
$$G = \frac{\Delta \tilde\psi_{\theta,I}}{w_I}+ \nabla_\theta\log p(\theta)-\nabla_\theta\log q_\lambda(\theta)\in \mathbb{R}^{p\times 1},$$
where $\Delta \tilde\psi_{\theta,\ell}$ is given by \cref{eq:RPdelta} and $\nabla_\theta\log q_\lambda(\theta)=-\Sigma^{-1}(\theta-\mu)=-(LL^\top)^{-1}(\theta-\mu)$. Then the RP estimator is given by $$\mathrm{RP}_{\text{MLMC}}(\lambda)=(G,\vech(G\bm u^\top))\in \mathbb{R}^{d_\lambda\times 1}.$$
The second term $\nabla_\theta\log p(\theta)$ in $G$ depends on the prior. Particularly, if the prior is normally distributed, say, $N(\mu_0,\Sigma_0)$, then $\nabla_\theta\log p(\theta)=-\Sigma_0^{-1}(\theta-\mu_0)$. It is crucial to work out the term $\nabla_\theta \Lambda(\bm v;\theta)\nabla_x f(x;\yobs)$ used in $\Delta \tilde\psi_{\theta,\ell}$, which is model-specific. The whole procedure for the RP method is summarized in \Cref{alg:em2}.

\begin{algorithm}
	\caption{Unbiased MLMC with the RP estimator\label{alg:em2}}
	\begin{algorithmic}[1]
		\STATE Initialize $\lambda^{(0)} = (\mu^{(0)},\vech(L^{(0)}))$, $t=0$, $M$ the number of outer samples, $\alpha\in(1,r)$ and $w_\ell\propto 2^{-\alpha \ell}$ such that $\sum_{\ell=0}^\infty w_\ell =1$ and all $w_\ell>0$.
		
		\STATE Repeat
		
		(a) Generate $\bm{u}_1^{(t)},\dots, \bm{u}_m^{(t)}\sim N(0,I_p)$ independently and set $\theta^{(t)}_i = \mu^{(t)}+L^{(t)}\bm u_i^{(t)}$. Generate $I_1^{(t)},\dots,I_m^{(t)}$ independently and randomly with probability $w_\ell$.
		
		(b) Let $n_i = M_02^{I_i^{(t)}}$. For $i=1,\dots,m$, generate
		$\bm{v}_{i1}^{(t)},\dots,\bm{v}_{in_i}^{(t)}\sim p_2(\bm v)$ independently
		and set $x_{ij}^{(t)} = \Lambda(\bm{v}_{ij}^{(t)};\theta^{(t)}_i)$, $j=1,\dots,n_i$. Compute the associated samples of the corrections $\Delta \psi_{\theta,I}$ and $\Delta \tilde\psi_{\theta,I}$, denoted by $\Delta \psi^{(t)}_i$ and $\Delta \tilde\psi^{(t)}_i$, respectively.
		
		(c) Compute the gradient estimator 		
		$$\widehat{\nabla_\lambda L}^{\mathrm{RP}}(\lambda^{(t)}) = \frac{1}{m}\sum_{i=1}^m (G_i^{(t)},\vech(G_i^{(t)}\bm u_i^\top)),$$
		where $$G_i^{(t)}= \frac{\Delta \tilde\psi^{(t)}_i}{w_{I_i^{(t)}}}+ \nabla_\theta\log p(\theta^{(t)}_i)-\nabla_\theta\log q_{\lambda^{(t)}}(\theta^{(t)}_i),$$
		and compute the ELBO estimator
		$$\mathrm{LB}(\lambda^{(t)}) = \frac{1}{S}\sum_{i=1}^S \frac{	\Delta \psi^{(t)}_i}{w_{I_i^{(t)}}}+ \log p(\theta^{(t)}_i)-\log q_{\lambda^{(t)}}(\theta^{(t)}_i).$$
		Update the VB parameter:
		$$\lambda^{(t+1)} = \lambda^{(t)}+\rho_t \widehat{\nabla_\lambda L}^{\mathrm{RP}}(\lambda^{(t)}).$$

		(d) $t = t+1$
		
		until some stopping rule is satisfied.
	\end{algorithmic}
\end{algorithm}

Notice that not only the gradient estimators but also the ELBO estimators are unbiased in Algorithms \Cref{alg:em1,alg:em2}. The unbiased MLMC methods can be expected to estimate the ELBO more accurately.

\section{Incorporating RQMC}\label{se:RQMC}

We now incorporate RQMC sampling based scrambled $(t,s)$-sequences within the MLMC estimators. Quasi-Monte Carlo (QMC) is designed for computing expectations of $f(\bm v)$ for $\bm v\sim U[0,1]^s$. We should note that in our present context, the underlying distributions are not the form of uniforms. To fit QMC in practice, one must transform the base distribution $U[0,1]^s$ to the underlying distributions. Suppose that there exists a transformation $\psi(\cdot)$ such $\psi(\bm v)\sim p$, where $p$ is the underlying distribution. Below we subsume any such transformation $\psi(\cdot)$ into the definition of $f$.

To estimate $\mu = \int_{[0,1]^s}f(\bm v)\mrd \bm v$, QMC methods use a sample-mean estimator
$$\hat \mu =\frac{1}{N}\sum_{i=1}^N f(\bm v_i),$$
where $\bm v_1,\dots,\bm v_N$ are the first $N$ points of a low discrepancy sequence. By the Koksma-Hlawka inequality, we have
$$\abs{\hat \mu-\mu}\le \hk(f)D^*(\bm v_1,\dots,\bm v_N),$$
where $\hk(f)$ is the variation of the integrand $f(\cdot)$ in the sense of Hardy and Krause, and $D^*(\bm v_1,\dots,\bm v_N)$ is the star discrepancy of the point set $\{\bm v_1,\dots,\bm v_N\}$. For $(t,s)$-sequences, we have $$D^*(\bm v_1,\dots,\bm v_N)=O(N^{-1}(\log N)^{s})=O(N^{-1+\epsilon}),$$ where we use an arbitrarily small $\epsilon>0$ for hiding the logarithm term throughout this paper. If $f$ is of bounded variation in the sense of Hardy and Krause (BVHK), one gets a QMC error of $O(N^{-1+\epsilon})$.
To get a practical error estimate, RQMC methods were introduced, see \cite{LEcuyer2005} for a review. In this paper, we use the scrambling technique proposed by \cite{Owen1995} to randomize  $(t,s)$-sequences.  In RQMC, each $\bm v_i\sim U[0,1]^s$ marginally, implying that $\hat \mu$ is unbiased for $\mu$. More importantly, scrambled $(t,s)$-sequence retains a $(t,s)$-sequence w.p.1. This leads to
$$\var{\hat \mu}=\mbe[(\hat \mu-\mu)^2]\le \hk(f)^2D^*(\bm v_1,\dots,\bm v_N)^2,$$
where the expectation is taken with respect to the randomness of scrambling. Apparently, the RQMC variance is of $O(N^{-2+\epsilon})$ if $f$ is of BVHK.

Now we focus on how to incorporate RQMC within the MLMC estimators. In fact, for both the SF and RP  estimators, one needs to sample $\theta\sim q_\lambda(\theta)$, $x_1,\dots,x_{M_I}\sim p(x|\theta,I)$ and $I$ from a discrete distribution with $P(I=i)=w_i$ as stated above. For each realization, the number of random variables depends on $I$, which takes values in $\mbn$. It is not possible to use a scrambled $(t,s)$-sequence to sample all random variables in a single run because we need determine the dimension $s$ in advance. Instead, we use hybrid sequences within the MLMC estimators. Specifically,  we still use MC to sample $\theta$ and $I$, but use RQMC in inner simulation. That is, $x_1,\dots,x_{M_I}$ is based on a scrambled $(t,s)$-sequence. To this end, we assume that there exists a transformation $\Lambda$ such that
$$x=\Lambda(\bm v;\theta)\sim p(x|\theta),$$
where $\bm v\sim U[0,1]^s$.  We then takes $x_i = \Lambda(\bm v_i;\theta)$ in the inner simulation, where $\bm v_1,\dots,\bm v_{M_I}$ are the first $M_I$ points of a scrambled $(t,s)$-sequence. Since RQMC estimates are unbiased, the replacement of RQMC will not change the unbiasedness of the gradient estimators.

We are ready to establish an RQMC version of \Cref{thm:sfmc} for the SF gradient. We should note that \Cref{thm:sfmc} may not be extended to the RQMC setting since \Cref{lem:1} holds only for iid samples.  Recently, for proving strong law of large numbers for scrambled net
integration, \cite{OR:2021} showed that $\mbe[\abs{\bar{X}_N}^p]\le C_pN^{1-p}$ for $p\in(1,2)$ via the Riesz-Thorin interpolation theorem, where $\bar{X}_N$ is an average of $N$ RQMC samples of $X$ with $\mbe[X]=0$. However, this result is not for the case $p>2$ required in \Cref{lem:1}. It is not clear whether the RQMC version of \Cref{lem:1} holds. This is left for future research. The theorem we provide below is totally different from \Cref{thm:sfmc}, and the proof of which does not depend on \Cref{lem:1}.

\begin{theorem}\label{thm:sfqmc}
	Suppose that samples $x_i = \Lambda(\bm v_i;\theta),i=1,\dots,M_\ell$ are used in the SF estimator \cref{eq:SF}, where $\bm v_i\in [0,1]^s$ are the first $M_\ell$ points of a scrambled $(t,s)$-sequence. If
	$$\mbe\left[\frac{\hk(f_\theta)^2||\nabla_\lambda \log q_\lambda(\theta)||_2^2}{f_\theta(\bm v)^2}\right]<\infty,$$
	where $\bm v\sim U[0,1]^s$, $f_\theta(\bm v) = f(\Lambda(\bm v;\theta);\yobs)$, and $\Lambda(\bm v;\theta)\sim p(x|\theta)$,
	then we have
	$$\mbe[\Delta \psi_{\theta,\ell}^2||\nabla_\lambda \log q_\lambda(\theta)||_2^2] = O(2^{-r\ell}) \text{ with } r=2-\epsilon$$
	for arbitrarily small $\epsilon>0$.
\end{theorem}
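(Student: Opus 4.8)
The plan is to reduce everything to a \emph{pathwise} Koksma--Hlawka bound on the inner RQMC averages, which makes the antithetic second-order cancellation used in \Cref{thm:sfmc} unnecessary here. First I would normalize: set $g_\theta(\bm v) = f_\theta(\bm v)/p(\yobs|\theta)$, so that $\int_{[0,1]^s}g_\theta(\bm v)\,\mrd\bm v = 1$, $\hk(g_\theta) = \hk(f_\theta)/p(\yobs|\theta)$, and crucially $\hk(g_\theta)^2/g_\theta(\bm v)^2 = \hk(f_\theta)^2/f_\theta(\bm v)^2$, so the hypothesis reads $\mbe_\theta\bigl[\|\nabla_\lambda\log q_\lambda(\theta)\|_2^2\,\hk(g_\theta)^2\,\mbe_{\bm v}[g_\theta(\bm v)^{-2}]\bigr] < \infty$. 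Writing $R_N = \frac1N\sum_{i=1}^N g_\theta(\bm v_i)$ for the average over the first $N$ scrambled points, the additive constant $\log p(\yobs|\theta)$ cancels in the telescoping difference, so $\Delta\psi_{\theta,\ell} = \log R_{M_\ell} - \tfrac12\log R^{(a)} - \tfrac12\log R^{(b)}$, where $R^{(a)},R^{(b)}$ are the averages over the first and second blocks of $M_{\ell-1}$ points. (As in \Cref{thm:sfmc}, one needs $f_\theta>0$ a.s.\ so these averages are positive; the argument below does not otherwise use the antithetic structure.)

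Next I would invoke the facts recalled in \Cref{se:RQMC}: a scrambled $(t,s)$-sequence is w.p.\ $1$ still a $(t,s)$-sequence, hence its star discrepancy obeys $D^*_N \le c_\epsilon N^{-1+\epsilon}$ with $c_\epsilon$ depending only on $t,s,\epsilon$. Combined with the Koksma--Hlawka inequality applied to $g_\theta$ (which is BVHK for a.e.\ $\theta$ by hypothesis), this gives, w.p.\ $1$ and for a.e.\ $\theta$,
\begin{equation*}
|R_{M_\ell}-1|,\ |R^{(a)}-1|,\ |R^{(b)}-1| \ \le\ \eta_\ell := c_\epsilon\,\hk(g_\theta)\,M_{\ell-1}^{-1+\epsilon}.
\end{equation*}
Then I would control each logarithm by the elementary inequality $|\log x|\le |x-1|\max(1,1/x)$ together with the AM--HM inequality $1/R_N \le \frac1N\sum_{i=1}^N g_\theta(\bm v_i)^{-1} =: Q_N$, giving $(\log R_N)^2 \le (R_N-1)^2\max(1,R_N^{-2}) \le \eta_\ell^2(1+Q_N^2)$. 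Substituting into the identity for $\Delta\psi_{\theta,\ell}$ yields, w.p.\ $1$,
\begin{equation*}
\Delta\psi_{\theta,\ell}^2 \ \le\ C\,\eta_\ell^2\bigl(1 + Q_{M_\ell}^2 + (Q^{(a)})^2 + (Q^{(b)})^2\bigr)
\end{equation*}
for an absolute constant $C$, where $Q^{(a)},Q^{(b)}$ are the analogous harmonic-type averages over the two blocks.

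Finally I would take expectations. Since each scrambled point is marginally uniform on $[0,1]^s$ and $x\mapsto x^2$ is convex, $\mbe[Q_N^2\mid\theta]\le \mbe_{\bm v}[g_\theta(\bm v)^{-2}]$, and Jensen gives $\mbe_{\bm v}[g_\theta(\bm v)^{-2}]\ge 1$. Conditioning on $\theta$ and using $\eta_\ell^2 = c_\epsilon^2\hk(g_\theta)^2 M_{\ell-1}^{-2+2\epsilon}$, I obtain
\begin{equation*}
\mbe\bigl[\Delta\psi_{\theta,\ell}^2\,\|\nabla_\lambda\log q_\lambda(\theta)\|_2^2\bigr] \ \le\ C'\,M_{\ell-1}^{-2+2\epsilon}\,\mbe\!\left[\frac{\hk(f_\theta)^2\,\|\nabla_\lambda\log q_\lambda(\theta)\|_2^2}{f_\theta(\bm v)^2}\right],
\end{equation*}
which is finite by hypothesis; since $M_{\ell-1} = M_0 2^{\ell-1}$, the right-hand side is $O(2^{-(2-2\epsilon)\ell})$, and relabeling $\epsilon$ proves the claim with $r = 2-\epsilon$.

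The main obstacle is conceptual rather than computational: the instinct is to imitate \Cref{thm:sfmc} and try to squeeze a rate above $2$ out of the antithetic coupling, but squaring such a second-order term requires fourth-moment bounds for scrambled-net averages, which — as the paragraph preceding \Cref{thm:sfqmc} notes — are not available. The resolution is that for RQMC the cancellation is not needed at all: the pathwise Koksma--Hlawka bound already makes each $\log$-term $O(2^{-(1-\epsilon)\ell})$, so the target rate $2-\epsilon$ falls out termwise. The only genuine difficulty is that when $\hk(g_\theta)$ is large the estimate $|R_N-1|\le\eta_\ell$ no longer keeps $R_N$ away from $0$; controlling $\mbe[Q_N^2\mid\theta]$ via AM--HM and absorbing it into the joint $(\theta,\bm v)$-integrability of $\hk(f_\theta)^2/f_\theta(\bm v)^2$ is exactly what handles this, which is why the hypothesis is stated with $f_\theta(\bm v)^2$ in the denominator.
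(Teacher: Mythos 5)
Your proof is correct and follows essentially the same route as the paper's: a pathwise Koksma--Hlawka bound on the inner RQMC averages, the inequality $|\log x|\le \max(1,1/x)|x-1|$, the AM--HM/Jensen step bounding $1/P_\ell^2$ by the average of $1/f_\theta(\bm v_i)^2$, and the marginal uniformity of scrambled points to take expectations. The only difference is cosmetic: you normalize by $p(\yobs|\theta)$ up front, whereas the paper keeps $P_\ell$ and $p(\yobs|\theta)$ separate and handles the resulting $1/p(\yobs|\theta)^2$ term by one extra application of Jensen's inequality.
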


\begin{proof}
Note that $p(\yobs|\theta) =\mbe[f_\theta(\bm v)|\theta]$. Let $$P_\ell = \frac{1}{M_\ell} \sum_{i=1}^{M_{\ell}} f(x_i;\yobs)=\frac{1}{M_\ell} \sum_{i=1}^{M_{\ell}} f_\theta(\bm v_i),$$
with $P_{\ell-1}^{(a)} = \frac{1}{M_{\ell-1}}\sum_{i=1}^{M_{\ell-1}}f_\theta(\bm v_i)$, and $P_{\ell-1}^{(b)} = \frac{1}{M_{\ell-1}}\sum_{i=M_{\ell-1}+1}^{M_{\ell}} f_\theta(\bm v_i)$.
All of them are RQMC estimators for $p(\yobs|\theta)$. We have
	$$\Delta \psi_{\theta,\ell} = [\log P_\ell-\log p(\yobs|\theta)]-\frac{1}{2}[(\log P_\ell^{(a)}-\log p(\yobs|\theta))+(\log P_\ell^{(b)}-\log p(\yobs|\theta))].$$
	Applying Jensen's inequality gives
	$$\Delta \psi_{\theta,\ell}^2\le 2(\log P_\ell-\log p(\yobs|\theta))^2+(\log P_\ell^{(a)}-\log p(\yobs|\theta))^2+(\log P_\ell^{(b)}-\log p(\yobs|\theta))^2.$$
	
	Note that $\abs{\log t}\le \max(1,1/t)\abs{t-1}\le (1+1/t)\abs{t-1}$ for any $t>0$. We thus have
	$$\abs{\log P_\ell-\log p(\yobs|\theta)}\le (1/p(\yobs|\theta)+ 1/P_\ell)\abs{P_\ell-p(\yobs|\theta)}.$$
	By the  Koksma-Hlawka inequality, we have
	$$\abs{P_\ell-p(\yobs|\theta)}\le \hk(f_\theta)D_{\ell},$$
	where $D_{\ell}=D^*(\bm v_1,\dots,\bm v_{M_\ell})$.
	This implies that
	$$(\log P_\ell-\log p(\yobs|\theta))^2\le 2\hk(f_\theta)^2D_\ell^2\left(\frac 1{p(\yobs|\theta)^2}+ \frac 1{P_\ell^2}\right).$$
	Let $H(\theta)=\hk(f_\theta)||\nabla_\lambda \log q_\lambda(\theta)||_2$. We then have
	\begin{align*}
	\mbe[(\log P_\ell-\log p(\yobs|\theta))^2&||\nabla_\lambda \log q_\lambda(\theta)||_2^2] \le 2D_{\ell}^2\left(\mbe\left[\frac {H(\theta)^2}{p(\yobs|\theta)^2}\right]+ \mbe\left[\frac {H(\theta)^2}{P_\ell^2}\right]\right).
	\end{align*}
	
   By Jensen's inequality, we have
\begin{equation}\label{eq:pl}
\frac{1}{P_\ell^2}=\left(\frac{1}{\frac{1}{M_\ell} \sum_{i=1}^{M_{\ell}} f_\theta(\bm v_i) }\right)^2\le \frac{1}{M_\ell} \sum_{i=1}^{M_{\ell}} \frac{1}{ f_\theta(\bm v_i)^2}.
\end{equation}
By the unbiasedness of RQMC estimators and the law of total expectation,
	\begin{align*}
\mbe\left[\frac {H(\theta)^2}{P_\ell^2}\right]&\le \mbe\left[\frac{H(\theta)^2}{M_\ell} \sum_{i=1}^{M_{\ell}} \frac{1}{f(x_i;\yobs)^2}\right]\\&=\mbe\left[H(\theta)^2\mbe\left[\frac{1}{M_\ell} \sum_{i=1}^{M_{\ell}} \frac{1}{f(x_i;\yobs)^2}\bigg|\theta\right]\right]\\&=\mbe\left[H(\theta)^2\mbe\left[\frac {1}{f(x;\yobs)^2}\bigg |\theta\right]\right]=\mbe\left[\frac {H(\theta)^2}{f(x;\yobs)^2}\right]<\infty.
	\end{align*}
On the other hand, by using Jensen's inequality  and the law of total expectation again,
\begin{align*}
	&\mbe\left[\frac {H(\theta)^2}{p(\yobs|\theta)^2}\right]=\mbe\left[\frac {H(\theta)^2}{\mbe[f(x;\yobs)|\theta]^2}\right]\\
\le &\mbe\left[ H(\theta)^2\mbe\left[\frac{1}{f(x;\yobs)^2}\bigg|\theta\right]\right]=\mbe\left[ \frac{H(\theta)^2}{f(x;\yobs)^2}\right]<\infty.
\end{align*}	
	We therefore have
$$\mbe[(\log P_\ell-\log p(\yobs|\theta))^2||\nabla_\lambda \log q_\lambda(\theta)||_2^2]=O(D_\ell^2) = O(M_\ell^{-2+\epsilon})=O(2^{-r\ell})$$
for $r=2-\epsilon$ and any $\epsilon>0$. This argument holds also by replacing $P_\ell$ with $P_\ell^{(a)}$ or $P_\ell^{(b)}$. We thus have $\mbe[\Delta \psi_{\theta,\ell}^2||\nabla_\lambda \log q_\lambda(\theta)||_2^2] = O(2^{-r\ell})$.
\end{proof}

We next establish an RQMC version of \Cref{thm:remc} for the RP gradient. \Cref{thm:remc} cannot be extended to the RQMC setting since its proof depends on \Cref{lem:1} as well.

\begin{theorem}\label{thm:rpqmc}
Suppose that samples $x_i = \Lambda(\bm v_i;\theta),\ i=1,\dots,M_\ell$ in the RP estimator \cref{eq:rp}, where $\bm v_i\in [0,1]^s$ are the first $M_\ell$ points of a scrambled $(t,s)$-sequence. If
$$\mbe\left[\frac{||\nabla_\lambda \Gamma(\bm u;\lambda)||_{\max}^2}{p(\yobs|\theta)^2}\left(\frac{(\norm{\nabla p(\yobs|\theta)}_2^2+\norm{\hk(\nabla_{\theta} f_\theta)}_2^2)\hk(f_\theta)^2}{f_\theta(\bm v)^2}+\norm{\hk(\nabla_{\theta} f_\theta)}_2^2\right) \right]$$
is finite, where $\bm v\sim U[0,1]^s$, $f_\theta(\bm v) = f(\Lambda(\bm v;\theta);\yobs)$, $\Lambda(\bm v;\theta)\sim p(x|\theta)$, $\theta =\Gamma(\bm u;\lambda)\sim q_\lambda(\theta)$, $\hk(\nabla_{\theta} f_\theta)$ denotes a vector of $\hk(\partial_{\theta_i} f_\theta)$, and $||A||_{\max}$ denotes the largest absolute value of the entries of the matrix $A$, we have
$$\mbe[||\nabla_\lambda \Gamma(\bm u;\lambda)\Delta \tilde \psi_{\theta,\ell}||_2^2] = O(2^{-r\ell}) \text{ with } r=2-\epsilon$$
for arbitrarily small $\epsilon>0$.
\end{theorem}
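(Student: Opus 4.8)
The plan is to transplant the argument behind Theorem~3.1 of \cite{goda:2020} into the scrambled-net setting, keeping the same skeleton as the proof of \Cref{thm:sfqmc}. Write $p=p(\yobs|\theta)$ and $\nabla p=\nabla_\theta p(\yobs|\theta)$, so that the target of $\tilde\psi_{\theta,M_\ell}$ is $\nabla p/p=\nabla_\theta\log p(\yobs|\theta)$. First I would reduce to a single level: by the elementary inequality $\norm{a-\tfrac12 b-\tfrac12 c}_2^2\le 2\norm{a}_2^2+\norm{b}_2^2+\norm{c}_2^2$ (the same Jensen-type step used in the proof of \Cref{thm:sfmc}),
$$\norm{\Delta\tilde\psi_{\theta,\ell}}_2^2\le 2\norm{\tilde\psi_{\theta,M_\ell}-\tfrac{\nabla p}{p}}_2^2+\norm{\tilde\psi_{\theta,M_{\ell-1}}^{(a)}-\tfrac{\nabla p}{p}}_2^2+\norm{\tilde\psi_{\theta,M_{\ell-1}}^{(b)}-\tfrac{\nabla p}{p}}_2^2,$$
so it suffices to estimate $\mbe[\norm{\nabla_\lambda\Gamma(\bm u;\lambda)(\tilde\psi_{\theta,M}-\nabla p/p)}_2^2]$ for the blocks entering the level-$M_\ell$ and level-$M_{\ell-1}$ estimators. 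Since a scrambled $(t,s)$-sequence is still a $(t,s)$-sequence w.p.1, its star discrepancy $D^*_M=D^*(\bm v_1,\dots,\bm v_M)$ satisfies $D^*_M\le C_{t,s}M^{-1}(\log M)^s$ surely, hence $(D^*_M)^2=O(M^{-2+\epsilon})$; because $M_{\ell-1}=M_\ell/2$, a bound of order $(D^*_M)^2$ for each block already yields $r=2-\epsilon$, so (in contrast to the MC proofs) the antithetic coupling plays no role in the rate here, exactly as it plays no role in \Cref{thm:sfqmc}.

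For one block of $M$ scrambled points, set $P=\tfrac1M\sum_{i=1}^M f_\theta(\bm v_i)$ and $Q=\tfrac1M\sum_{i=1}^M\nabla_\theta f_\theta(\bm v_i)$; invoking the interchange of differentiation and expectation, these are unbiased RQMC estimators of $p$ and $\nabla p$, with $\tilde\psi_{\theta,M}=Q/P$. The key algebraic identity I would use is the three-term split
$$\frac{Q}{P}-\frac{\nabla p}{p}=\frac{Q-\nabla p}{p}+\frac{(Q-\nabla p)(p-P)}{Pp}+\frac{\nabla p}{p}\cdot\frac{p-P}{P}.$$
Applying the Koksma--Hlawka inequality to $f_\theta$ and, componentwise, to the $\partial_{\theta_i}f_\theta$ gives $\abs{p-P}\le\hk(f_\theta)D^*_M$ and $\norm{Q-\nabla p}_2\le\norm{\hk(\nabla_\theta f_\theta)}_2 D^*_M$ (the finiteness of these variations being built into the hypothesis); using $D^*_M\le 1$ to replace the extra $(D^*_M)^2$ in the middle term by $1$, and $\norm{Av}_2^2\le d_\lambda p\,\norm{A}_{\max}^2\norm{v}_2^2$ for the $d_\lambda\times p$ Jacobian $A=\nabla_\lambda\Gamma(\bm u;\lambda)$, I would obtain that $\norm{\nabla_\lambda\Gamma(\bm u;\lambda)(Q/P-\nabla p/p)}_2^2$ is, up to the constant $d_\lambda p$, at most $(D^*_M)^2$ times
$$\norm{\nabla_\lambda\Gamma(\bm u;\lambda)}_{\max}^2\left(\frac{\norm{\hk(\nabla_\theta f_\theta)}_2^2}{p^2}+\frac{\norm{\hk(\nabla_\theta f_\theta)}_2^2\hk(f_\theta)^2}{p^2P^2}+\frac{\norm{\nabla p}_2^2\hk(f_\theta)^2}{p^2P^2}\right).$$

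The last step removes the inner factor $1/P^2$ just as in the proof of \Cref{thm:sfqmc}: Jensen's inequality gives $1/P^2\le\tfrac1M\sum_{i=1}^M 1/f_\theta(\bm v_i)^2$ (here $f_\theta>0$ is used), and conditioning on $\bm u$ — which fixes $\theta$, $\nabla_\lambda\Gamma(\bm u;\lambda)$ and every $\theta$-measurable quantity — together with the fact that each scrambled point is marginally uniform on $[0,1]^s$ yields $\mbe[\tfrac1M\sum_i 1/f_\theta(\bm v_i)^2\mid\bm u]=\mbe[1/f_\theta(\bm v)^2\mid\theta]$. By the tower property the first term above is left untouched while the second and third pick up the factor $1/f_\theta(\bm v)^2$, and the three contributions sum to exactly the expectation assumed finite in the theorem, multiplied by $(D^*_M)^2=O(M^{-2+\epsilon})$. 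Feeding this into the reduction of the first paragraph with $M\in\{M_\ell,M_{\ell-1}\}$ and $M_\ell=M_02^\ell$ gives $\mbe[\norm{\nabla_\lambda\Gamma(\bm u;\lambda)\Delta\tilde\psi_{\theta,\ell}}_2^2]=O(2^{-r\ell})$ with $r=2-\epsilon$. I expect the only real difficulty to be bookkeeping: choosing the three-way split so that the induced integrability demand matches the stated hypothesis term by term, and checking that the ``second-half'' block $\{\bm v_{M_{\ell-1}+1},\dots,\bm v_{M_\ell}\}$ feeding the $(b)$-estimator is itself a scrambled net of the correct discrepancy $O(M_{\ell-1}^{-1+\epsilon})$ — which holds because $M_0$ is a power of the base — so the $(b)$-term is handled verbatim.
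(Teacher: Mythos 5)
Your proposal is correct and follows essentially the same route as the paper's proof: reduce to single blocks, apply Koksma--Hlawka to $f_\theta$ and to each $\partial_{\theta_i}f_\theta$, absorb the extra discrepancy factor via $D^*_M\le 1$, remove $1/P^2$ by Jensen plus the tower property, and control the Jacobian by $\|\cdot\|_{\max}$. The only cosmetic difference is that you use an exact three-term algebraic identity for $Q/P-\nabla p/p$ where the paper uses a two-term triangle split followed by bounding $\mathcal{N}_{\ell,i}^2\le 2(\partial_{\theta_i}p)^2+2\hk(\partial_{\theta_i}f_\theta)^2$; both yield term-by-term exactly the integrability condition in the hypothesis, and your remark on the $(b)$-block is, if anything, more careful than the paper's.
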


\begin{proof}
	We use the notations $P_\ell$, $P_{\ell-1}^{(a)}$ and $P_{\ell-1}^{(b)}$ defined in the proof of \Cref{thm:sfqmc}, and define
	$$\mathcal{N}_\ell=\frac{1}{M_\ell}\sum_{i=1}^{M_\ell}\nabla_\theta \Lambda(\bm v_i;\theta)\nabla_x f(x_i;\yobs)=\frac{1}{M_\ell}\sum_{i=1}^{M_\ell}\nabla_{\theta} f_\theta(\bm v_i),$$
with $\mathcal{N}_{\ell-1}^{(a)} = \frac{1}{M_{\ell-1}}\sum_{i=1}^{M_{\ell-1}}\nabla_{\theta} f_\theta(\bm v_i)$, and $\mathcal{N}_{\ell-1}^{(b)} = \frac{1}{M_{\ell-1}}\sum_{i=M_{\ell-1}+1}^{M_{\ell}}\nabla_{\theta} f_\theta(\bm v_i)$.
It is clear that $\mbe[\mathcal{N}_\ell|\theta]=\mbe[\mathcal{N}_{\ell-1}^{(a)}|\theta]=\mbe[\mathcal{N}_{\ell-1}^{(b)}|\theta]=\nabla_\theta p(\yobs|\theta)$, and $\mbe[P_\ell|\theta]=\mbe[P_{\ell-1}^{(a)}|\theta]=\mbe[P_{\ell-1}^{(b)}|\theta]= p(\yobs|\theta)$. Note that
	\begin{align*}
		\Delta \tilde \psi_{\theta,\ell} &= \frac{\mathcal{N}_\ell}{P_\ell}-\frac{1}{2}\left(\frac{\mathcal{N}_{\ell-1}^{(a)}}{P_{\ell-1}^{(a)}}+\frac{\mathcal{N}_{\ell-1}^{(b)}}{P_{\ell-1}^{(b)}}\right)\\
		&=\left[\frac{\mathcal{N}_\ell}{P_\ell}-\frac{\nabla_\theta p(\yobs|\theta)}{p(\yobs|\theta)}\right] -\frac{1}{2}\left[\frac{\mathcal{N}_{\ell-1}^{(a)}}{P_{\ell-1}^{(a)}}-\frac{\nabla_\theta p(\yobs|\theta)}{p(\yobs|\theta)}\right]-\frac{1}{2}\left[\frac{\mathcal{N}_{\ell-1}^{(b)}}{P_{\ell-1}^{(b)}}-\frac{\nabla_\theta p(\yobs|\theta)}{p(\yobs|\theta)}\right].
	\end{align*}
	Let $\mathcal{N}_{\ell,i}$ be the $i$th entry of $\mathcal{N}_\ell$, which is an unbiased estimator for $\partial_{\theta_i} p(\yobs|\theta)$. By the triangle inequality, we find that
	\begin{align}
	\left(\frac{\mathcal{N}_{\ell,i}}{P_\ell}-\frac{\partial_{\theta_i} p(\yobs|\theta)}{p(\yobs|\theta)}\right)^2&=\left(\frac{\mathcal{N}_{\ell,i}}{P_\ell}-\frac{\mathcal{N}_{\ell,i}}{p(\yobs|\theta)}+\frac{\mathcal{N}_{\ell,i}}{p(\yobs|\theta)}-\frac{\partial_{\theta_i} p(\yobs|\theta)}{p(\yobs|\theta)}\right)^2\notag\\
	&\le \frac{2}{p(\yobs|\theta)^2}\left[\frac{\mathcal{N}_{\ell,i}^2}{P_\ell^2}(P_\ell-p(\yobs|\theta))^2+(\mathcal{N}_{\ell,i}-\partial_{\theta_i} p(\yobs|\theta))^2	\right].\label{eq:delta}
	\end{align}
	By the  Koksma-Hlawka inequality, we have
	$$\abs{P_\ell-p(\yobs|\theta)}\le \hk(f_\theta)D_{\ell},$$
	$$\abs{\mathcal{N}_{\ell,i}-\partial_{\theta_i} p(\yobs|\theta)}\le \hk(\partial_{\theta_i} f_\theta)D_{\ell},$$
	where $D_{\ell}=D^*(\bm v_1,\dots,\bm v_{M_\ell})$.

For large enough $\ell$, it is reasonable to assume that $D_\ell<1$.	
Together with \cref{eq:pl} and \cref{eq:delta}, we then have
\begin{align*}
&\left(\frac{\mathcal{N}_{\ell,i}}{P_\ell}-\frac{\partial_{\theta_i} p(\yobs|\theta)}{p(\yobs|\theta)}\right)^2\\
\le& \frac{2D_{\ell}^2}{p(\yobs|\theta)^2}\left[\frac{\mathcal{N}_{\ell,i}^2}{P_\ell^2}\hk(f_\theta)^2+\hk(\partial_{\theta_i} f_\theta)^2	\right]\\
\le& \frac{4D_{\ell}^2}{p(\yobs|\theta)^2}\left[\frac{\partial_{\theta_i} p(\yobs|\theta)^2+\hk(\partial_{\theta_i} f_\theta)^2}{P_\ell^2}\hk(f_\theta)^2+\hk(\partial_{\theta_i} f_\theta)^2	\right]\\
\le& \frac{4D_{\ell}^2}{p(\yobs|\theta)^2}\left[\frac{(\partial_{\theta_i} p(\yobs|\theta)^2+\hk(\partial_{\theta_i} f_\theta)^2)\hk(f_\theta)^2}{M_\ell} \sum_{i=1}^{M_{\ell}} \frac{1}{f_\theta(\bm v_i)^2}+\hk(\partial_{\theta_i} f_\theta)^2	\right].
\end{align*}
Let $n_r$ and $n_c$ be the number of rows and columns of the Jacobian matrix $\nabla_\lambda \Gamma(\bm u;\lambda)$, respectively, and $M_\lambda=||\nabla_\lambda \Gamma(\bm u;\lambda)||_{\max}$.  As a result,
\begin{align*}
&\mbe\left[\norm{\nabla_\lambda \Gamma(\bm u;\lambda)\cdot\left(\frac{\mathcal{N}_\ell}{P_\ell}-\frac{\nabla_\theta p(\yobs|\theta)}{p(\yobs|\theta)}\right)}_2^2\right]\\
\le& n_cn_r\mbe\left[\sum_{i=1}^{n_r}M_\lambda^2\left(\frac{\mathcal{N}_{\ell,i}}{P_\ell}-\frac{\partial_{\theta_i} p(\yobs|\theta)}{p(\yobs|\theta)}\right)^2\right]\\
\le& C_{\ell}\mbe\left[\frac{M_\lambda^2}{p(\yobs|\theta)^2}\sum_{i=1}^{n_r}\left(\frac{(\partial_{\theta_i} p(\yobs|\theta)^2+\hk(\partial_{\theta_i} f_\theta)^2)\hk(f_\theta)^2}{f_\theta(\bm v)^2}+\hk(\partial_{\theta_i} f_\theta)^2\right) \right]\\
=&C_{\ell}\mbe\left[\frac{M_\lambda^2}{p(\yobs|\theta)^2}\left(\frac{(\norm{\nabla p(\yobs|\theta)}_2^2+\norm{\hk(\nabla_{\theta} f_\theta)}_2^2)\hk(f_\theta)^2}{f_\theta(\bm v)^2}+\norm{\hk(\nabla_{\theta} f_\theta)}_2^2\right) \right]\\
=&O(M_\ell^{-2+\epsilon})=O(2^{-r\ell})
\end{align*}
with $C_\ell=4n_cn_rD_{\ell}^2$ for $r=2-\epsilon$ and any $\epsilon>0$. By a similar argument in the proof of \Cref{thm:sfqmc}, we have $\mbe[||\nabla_\lambda \Gamma(\bm u;\lambda)\Delta \tilde \psi_{\theta,\ell}||_2^2] = O(2^{-r\ell})$.
\end{proof}

In \Cref{thm:sfqmc,thm:rpqmc}, the integrands in RQMC quadratures need to be BVHK. For practical problems, it may be very hard to verify such a condition. Particularly, if the integrands are not smooth enough, the BVHK condition does not hold. For such cases, one may get a lower rate $r$. For any integrand in $L^2[0,1]^s$, scrambled nets have variance $o(1/N)$ without requiring the BVHK condition \cite{owen1997a}. Additionally, for any fixed $N$, the scrambled nets variance is no worse than a constant times the MC variance. From this point of view, under the same conditions in \Cref{thm:sfmc,thm:remc}, we can expect that the rate $r$ for RQMC is no worse than that of MC. Finally, we should note that the rates established in \Cref{thm:sfqmc,thm:rpqmc} do not benefit from the antithetic coupling, implying that the results also hold for the usual way of coupling. One might get a better rate by taking account for the form of antithetic coupling.

There are some other ways to incorporate RQMC in MLMC. For example, one can use RQMC in the outer simulation. That is, the samples of $\theta$ are based on a scrambled $(t,s')$-sequence while the inner samples $x_i$ and the samples of $I$ are based on MC. To this end, assuming $\theta = \Gamma_\lambda(\bm u)\sim q_\lambda(\theta)$ with $\bm u\sim U[0,1]^{s'}$, we take
$$\theta_i = \Gamma_\lambda(\bm u_i),\ i=1,\dots,S,$$
where $\bm u_1,\dots,\bm u_S$ are the first $S$ points of a scrambled $(t,s')$ sequence.
Taking the SF gradient estimator \cref{eq:SF_esti} for instance, we have
\begin{align}
\var{\widehat{\nabla_\lambda L}^{\mathrm{SF}}(\lambda)} &=\mbe\left[\var{\widehat{\nabla_\lambda L}^{\mathrm{SF}}(\lambda)|\theta_{\{1:S\}}}\right] + \var{\mbe[\widehat{\nabla_\lambda L}^{\mathrm{SF}}(\lambda)|\theta_{\{1:S\}}]}\notag\\
&=\frac{1}{S} \mbe[\var{\mathrm{SF}_{\text{MLMC}}(\lambda)|\theta}] +\var{\frac{1}{S}\sum_{i=1}^S \mbe[\mathrm{SF}_{\text{MLMC}}^{(i)}(\lambda)|\theta_i]}\notag\\
&=\frac{1}{S} \mbe[\var{\mathrm{SF}_{\text{MLMC}}(\lambda)|\theta}] +\var{\frac{1}{S}\sum_{i=1}^S H(\theta_i)},\label{eq:decom}
\end{align}
where $H(\theta):=\nabla_\lambda \log q_\lambda(\theta) \left[\log p(\yobs|\theta)+ \log p(\theta)-\log q_\lambda(\theta)\right]$, $\theta_{\{1:S\}}=\{\theta_1,\dots,\theta_S\}$ and $\var{\cdot}$ and $\mbe[\cdot]$ are applied component-wisely.
The second term in \cref{eq:decom} is $O(1/S)$ when the $\theta_i$'s are generated using MC, while it should be $o(1/S)$ when the  $\theta_i$'s are generated using RQMC, or even better $O(S^{-2+\epsilon})$ if $H\circ \Gamma_\lambda$ is of BVHK.
The first term in \cref{eq:decom} is $O(1/S)$ for both cases. As a result, this strategy helps to reduce the variance in the outer sampling. Buchholz and Chopin \cite{BC:2019} applied this strategy in ABC. They found that the resulting ABC estimate has a lower variance than the MC counter-part. However, the rate of convergence cannot be improved due to the first term in \cref{eq:decom}. This strategy cannot improve the rates $r$ in \Cref{thm:sfmc,thm:remc} either.

One the other hand, we can also use a two-stage RQMC strategy. In the outer samples, we use a scrambled $(t,s')$-sequence to simulate $\theta$; while in each inner simulation, we use another independent branch of scrambled $(t,s)$-sequence to sample $x_i$. This two-stage RQMC strategy helps to reduce the noise in both inner and outer simulations. In our numerical experiments, we shall compare the effects of the three ways of using RQMC in MLMC.

\section{Numerical experiments}\label{sec:num}
\subsection{Approximate Bayesian computation}
ABC method is a generic tool in likelihood-free inference provided that it is easy to generate $y\sim  p (y|\theta)$. However, ABC methods do not target the exact posterior, but an approximation to some extent. More specially, let $\mathcal{S}(\cdot):\mathbb{R}^n\to \mathbb{R}^d$ be a vector of summary statistics, and $K_h(\cdot,\cdot)$ be  a $d$-dimensional kernel density with bandwidth $h>0$.
ABC posterior density of  $\theta$ is given by
\begin{equation*}
p_{\mathrm{ABC}}(\theta|\yobs)\propto p(\theta)\pabc(\yobs|\theta),
\end{equation*}
where the intractable likelihood is given by
\begin{equation}
\pabc(\theta|\yobs)=\int K_h(\mathcal{S}(y),\mathcal{S}(\yobs))p(y|\theta) \mrd y=\mathbb{E}_{p(y|\theta)}[K_h(\mathcal{S}(y),\mathcal{S}(\yobs))].\label{eq:abc}
\end{equation}
To fit the form \cref{eq:intrlik}, one gets  $f(y;\yobs):=K_h(\mathcal{S}(y),\mathcal{S}(\yobs))$, in which the latent variable $x$ is replaced by $y$.  To ensure $f(y;\yobs)>0$, we particularly take the Gaussian kernel
\begin{equation*}
K_h(s,\sobs)=(2\pi h)^{-d/2}\exp\left\lbrace-\frac{(s-\sobs)^\top(s-\sobs)}{2h}\right\rbrace,
\end{equation*}
where $d$ denotes the dimension of the summary statistics $\mathcal{S}(y)$.
If $\mathcal{S}(\yobs)$ is a sufficient statistic, then $p_{\mathrm{ABC}}(\theta|\yobs)$ converges to the exact posterior $p (\theta |\yobs)$ as $h\to 0$. Otherwise, $p_{\mathrm{ABC}}(\theta|\yobs)$ converges to the posterior $p(\theta |\mathcal{S}(\yobs))$ as $h\to 0$, where is a gap between $p (\theta |\mathcal{S}(\yobs))$ and $p (\theta |\yobs)$.

To apply the SF method, it suffices to provide the sample-mean likelihood estimator
\begin{equation*}
\hat{p}_N(\yobs|\theta) = \frac{1}{N}\sum_{i=1}^N K_h(\mathcal{S}(y^{[i]}),\mathcal{S}(\yobs)),
\end{equation*}
where $y^{[i]}$ are iid sample of $p(y|\theta)$. To apply the RP methods, we need to find the mappings such that
$$\theta=\Gamma(\bm u;\lambda)\sim q_\lambda(\theta)\text{ and }y =\Lambda(\bm v;\theta)\sim p(y|\theta),$$
where the distributions of $\bm u,\bm v$ do not depend on $\lambda$ and $\theta$, respectively. We also require the closed forms of $\nabla_yf(y;\yobs)$, $\nabla_\theta \Lambda(\bm v;\theta)$, and $\nabla_\lambda\Gamma(\bm u;\lambda)$. Note that
\begin{align*}
\frac{\partial f(y;\yobs)}{\partial y_i} &= \sum_{j=1}^d \frac{\partial K_h(\mathcal{S}(y),\mathcal{S}(\yobs))}{\partial \mathcal{S}_j}  \frac{\partial \mathcal{S}_j(y)}{\partial y_i}\\
&=\frac{K_h(\mathcal{S}(y),\mathcal{S}(\yobs)}{h}\sum_{j=1}^d  [\mathcal{S}_j(\yobs)-\mathcal{S}_j(y)]\frac{\partial \mathcal{S}_j(y)}{\partial y_i}.
\end{align*}
As a result, $$\nabla_yf(y;\yobs) =\frac{K_h(\mathcal{S}(y),\mathcal{S}(\yobs))\nabla_y\mathcal{S}(y)[\mathcal{S}(\yobs)-\mathcal{S}(y)]}{h}.$$
It reduces to verify and then compute the Jacobian matrix  $\nabla_y\mathcal{S}(y)$. If we take the entire data as the summary statistics, then  $\nabla_y\mathcal{S}(y)$ is an identity matrix.  If the summary statistics $\mathcal{S}(y)$ are sample moments,  $\nabla_y\mathcal{S}(y)$ can be easily computed. However, if the summary statistics $\mathcal{S}(y)$ are functions of sample quantiles, $\nabla_y\mathcal{S}(y)$ does not exist. So the SF method has a wider scope than the RP method.

\subsubsection{A toy example}
To show the unbiasedness of our methods visually, we consider a toy example of ABC which is investigated in \cite{ong:2018}. Let the data $y_1,\dots,y_n$ be from a Gaussian distribution with unknown mean $\theta$ and unit variance. We assume further the prior of $\theta$ is a standard normal distribution $N(0,1)$. Under this setting, the posterior distribution is tractable actually, which is $\theta|\yobs\sim N(n/(1+n)\bar{y}^*,1/(1+n))$, where $\bar{y}^*$ is the sample mean, but we still approximate the posterior distribution by VB methods for comparisons. Naturally, we take variational distribution $q(\theta)$ to be a normal $N(\mu,\sigma^2)$.

We take the entire data set $\yobs$ as the summary statistics (i.e., $\mathcal{S}(y)=y$) to compare the VBIL, VBSL and MLMC methods. The distribution of the summary statistic is normal, and so VBSL renders an unbiased estimator acting as a benchmark.
With the  Gaussian kernel, the ABC likelihood \cref{eq:abc} can be calculated analytically actually, which gives a guidance to choose a proper $h$. The details have been stated in \cite{ong:2018}. We take $h=0.1$ for the kernel function $K_h$ to guarantee the accuracy of the kernel approximation to the true posterior.

We test the SF and RP methods under the MC framework, respectively. In the all simulations, we consider $d=n=4$ and set the number of outer samples $S=100$ and the number of inner samples $N=100$ for all of the methods. We set the learning rate $\rho_t=1/(5+t)$. And $\alpha=1.3$ is taken for the SF methods while $\alpha=1.1$ is taken for the RP methods. We initialize the starting points for $q(\theta)$ to be $N(\bar{y}^*,1)$ and $\yobs=(0,\dots,0)$.

Figure \ref{fig:NLM} illustrates the variational posterior approximations of $\theta$ and corresponding ELBOs of VBSL, VBIL and unbiased MLMC method under the SF and RP frameworks respectively. Observed from the left panel of Figure \ref{fig:NLM}, the estimated densities of the MLMC methods and the benchmark method (VBSL) overlap considerably. On the contrary, the VBIL methods yield inaccurate densities and lower ELBOs. The ELBO of unbiased MLMC methods has more volatility than the other methods. A possible explanation is that, although the MLMC method eliminates bias, it may introduce more randomness. Nevertheless, it is apparent that MLMC methods find better variational parameters which benefit from the unbiasedness of the gradient estimators.
\begin{figure}[htbp]
  \centering
    \caption{Comparison of VBIL, VBSL and unbiased MLMC.\label{fig:NLM}}
	\subfigure[posterior distribution of $\theta$]{\includegraphics[width=4.5cm]{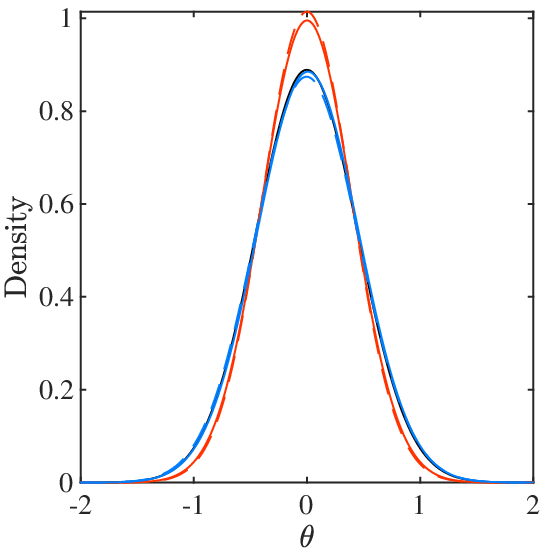}}
    \subfigure[ELBO]{\includegraphics[width=7.5cm]{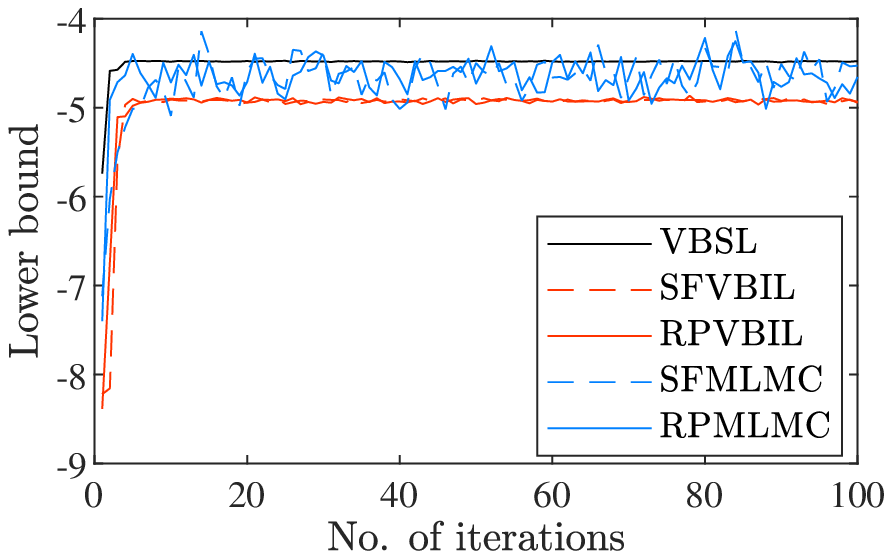}}
\end{figure}

\subsubsection{The g-and-k model}

The univariate $g$-and-$k$ distribution is a flexible unimodal distribution that
is able to describe data with significant amounts of skewness and kurtosis \cite{RM:2002}. Its density function has no closed form, but
is alternatively defined through its quantile function as:
$$Q(q|\theta)=A+B\left[1+0.8\frac{1-\exp\{-gz(q)\}}{1+\exp\{-gz(q)\}}\right](1+z(q)^2)^kz(q),$$
where $\theta=(A,B,g,k)$, $B>0, k>-1/2$, and $z(q)=\Phi^{-1}(q)$ denotes the inverse CDF of $N(0,1)$. If $g=k=0$, it reduces to  a normal distribution.  As shown in \cite{AKM:2009}, ABC is a good candidate for handling this model.

Suppose that the observations $\yobs$ of length $T=1000$ are independently generated from the $g$-and-$k$ distribution with parameter $\theta_0=(3,1,2,0.5)$. We use the unconstrained parameter
$\tilde{\theta}=(A,\log B,g,\log(k+1/2))$ in the VB and take the prior density  for $\tilde{\theta}$ as $N(0,4\cdot I_4)$. As suggested in \cite{DP:2011}, we take the summary statistics  $\mathcal{S}(y)=(\mathcal{S}_A,\mathcal{S}_B,\mathcal{S}_g,\mathcal{S}_k)$ with
\begin{align*}
\mathcal{S}_A &= E_4,\\
\mathcal{S}_B &= E_6-E_2,\\
\mathcal{S}_g &= (E_6+E_2-2E_4)/S_B,\\
\mathcal{S}_k &= (E_7-E_5+E_3-E_1)/S_B,
\end{align*}
where $E_1\le E_2\le\dots\le E_7$ are the octiles of $y$.
Note that $\mathcal{S}(y)$ is not differentiable, and thus the RP method cannot be applied. The observed summary statistics $\mathcal{S}(\yobs)=(3.05,1.63,1.58,0.42)$.

We compare MLMC and VBIL for a large bandwidth ($h=5$) and a small bandwidth ($h=0.5$), and look at the effect of bandwidth. The benchmark is the ABC acceptance-rejection (ABC-AR) samples of  size $10^4$. When $h=5$, the acceptance rate of ABC sampling is about $18\%$, while $h=0.5$, the acceptance rate reduces to $1\%$. We take $\alpha=1.3$ when $h=5$ while $\alpha=1.1$ when $h=0.5$ for the minor $h$ has effect on the smoothness of the inner function.

\begin{figure}[htbp]
  \centering
  \caption{Comparison of marginal posterior distributions.\label{fig:g-kdistribution}}
	\subfigure[$h=5$]{\includegraphics[width=13cm]{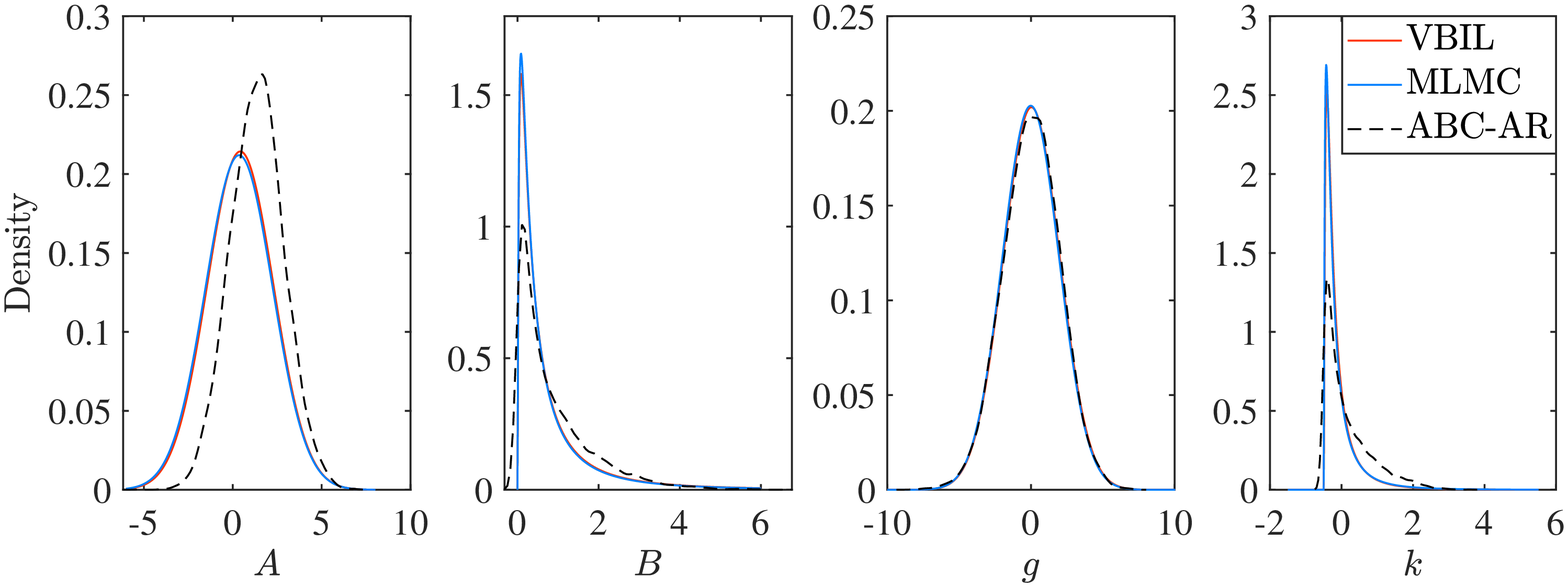}}
	\subfigure[$h=0.5$]{\includegraphics[width=13cm]{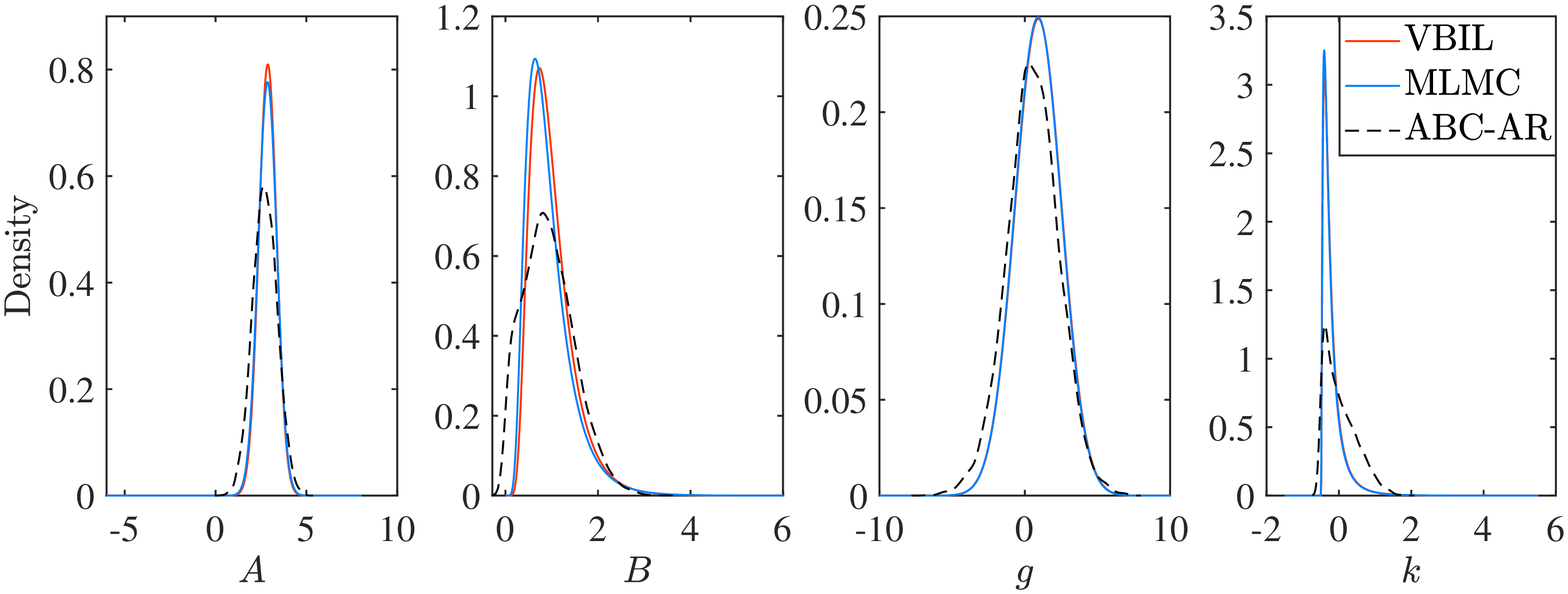}}
\end{figure}

\Cref{fig:g-kdistribution} shows the variational posterior distributions of VBIL and unbiased MLMC. As we can see, unbiased MLMC-based VB approximates the ABC posterior well, particularly for the marginal distributions of $A$ and $g$. Again, as shown in \Cref{fig:g-kLB}, unbiased MLMC leads to a larger ELBO.

\begin{figure}[htbp]
  \centering
  \caption{Comparison of ELBOs.  \label{fig:g-kLB}}
	\subfigure[$h=5$]{\includegraphics[width=6.2cm]{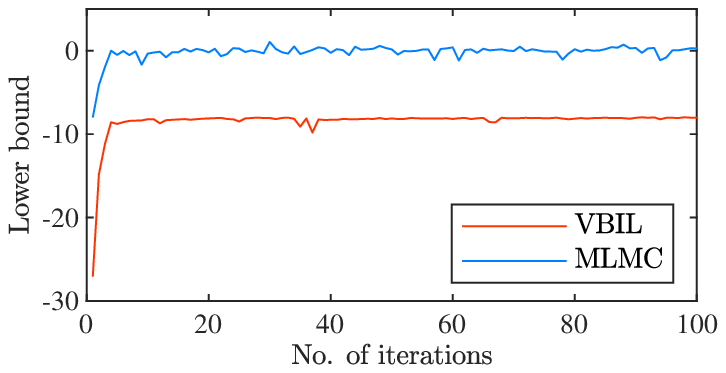}}
	\subfigure[$h=0.5$]{\includegraphics[width=6.2cm]{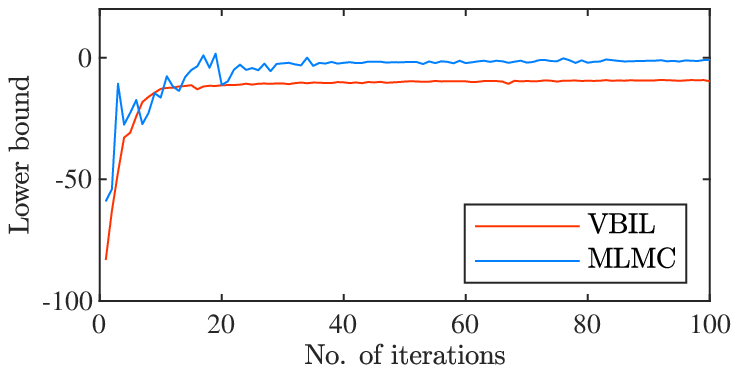}}
\end{figure}

Using RQMC in MLMC is minor for this example (the results are similar to  \Cref{fig:g-kdistribution,fig:g-kLB}, and are thus omitted for saving space). The reason is two-fold. First, it is required $1000$-dimensional RQMC points in the inner simulation, which is quite large. On the other hand, the summary statistics are functions of sampling quantiles, which are not smooth enough. Due to the high-dimensionality and the absence of smoothness in the integrands, RQMC may not perform well as expected. To overcome this, one may design some dimension reduction techniques for handling the integrand in \cref{eq:abc}.

\subsection{Generalized linear mixed models}
Generalized linear mixed models (GLMM) use a vector of random effects $\alpha_i$ to account for the dependence between the observations $y_i=\{y_{ij},j=1,\dots,n_i\}$ which are measured on the same individual $i$. The joint likelihood function of the model parameters $\theta$ and the random effects $\alpha=(\alpha_1,\dots,\alpha_n)$ is $p(\yobs,\alpha|\theta)=\prod_{i=1}^n p(\alpha_i|\theta)p(y_i|\theta,\alpha_i)$ which is tractable. However, the likelihood function $p(\yobs|\theta)=\prod_{i=1}^n p(y_i|\theta)$ with
$$p(y_i|\theta)=\int p(y_i|\theta,\alpha_i)p(\alpha_i|\theta)\mrd \alpha_i$$
is analytically intractable in most cases, while it can be easily estimated unbiasedly with importance sampling. Suppose $h_i(\alpha_i|\yobs,\theta)$ is an importance density for $\alpha_i$, then the likelihood $p(y_i|\theta)$ is estimated unbiasedly by
$$\hat p_{N_i}(y_i|\theta)=\frac{1}{N_i}
\sum_{j=1}^{N_i}\frac{p(y_i|\alpha_i^{(j)},\theta)p(\alpha_i^{(j)}|\theta)}{h_i(\alpha_i^{(j)}|\yobs,\theta)},$$
with $\alpha_i^{(j)}\stackrel{iid}{\sim} h_i(\cdot|\yobs,\theta)$.

We now compare the VBIL method and the unbiased MLMC methods using the Six City data in \cite{Fitz:1993}. The data consist of binary responses $y_{ij}$ which is the wheezing status (1 if wheezing, 0 if not wheezing) of the $i$th child at time-point $j$, where $i=1,\dots,537$ which represent 537 children and $j=1,2,3,4$ which denote $7,8,9,10$ year-old centered at 9 years correspondingly. Covariates are $A_{ij}, $the age of the $i$th child at time-point $j$ and $S_i$ the $i$th maternal smoking status (0 or 1). We consider the logistic regression model with a random intercept $y_{ij}|\beta,\alpha\sim \text{Binomial}(1,p_{ij})$, where $\text{logit}(p_{ij})=\beta_1+\beta_2A_{ij}+\beta_3S_i+\alpha_i$ with $\alpha_i\sim N(0,\tau^2)$. The parameters of this model are $\theta=(\beta,\tau^2)$. Then the likelihood function is given by
$$p(\yobs|\theta)=\prod_{i=1}^{537}\int\prod_{j=1}^4 \frac{\exp\{y_{ij}(\beta_1+\beta_2A_{ij}+\beta_3S_i+\alpha_i)\}}{1+\exp\{\beta_1+\beta_2A_{ij}+\beta_3S_i+\alpha_i\}}
\cdot \frac{1}{\sqrt{2\pi\tau^2}}\exp\{-\frac{\alpha_i^2}{2\tau^2}\}\mrd\alpha_i.$$

A normal prior $N(0,50I_3)$ is taken for $\beta$ with a $\text{Gamma}(1,0.1)$ prior for $\tau$, the square root of $\tau^2$.
We set the variational distribution $q_\lambda(\theta)$ to be a 4-dimensional normal $N(\mu,\Sigma)$, where we let $(\theta_1,\theta_2,\theta_3,\theta_4)$ denote $(\beta_1,\beta_2,\beta_3,\log\tau^2)$, which means the variational distribution of $\beta$ is a 3-dimensional normal distribution and $\tau^2$ is a log-normal distribution. This example was also investigated in \cite{tran:2017}. We focus on the RP method in this example because there is overwhelming empirical evidence in the literature showing the superiority of RP than SF. Some theoretical explanation can be found in \cite{Xu:2019}.


In the RP method, we take $\theta=(\beta,\log\tau^2)=\mu+L\bm u$, where $\bm u\sim N(0,I_4)$.
In the inner simulation, we take $x_i=(x_{i1},\dots,x_{i4})=(z_{i1},\dots,z_{i4})+\sqrt{\tau^2}\bm v_i\cdot1_4$, where $z_{ij}=\beta_1+\beta_2A_{ij}+\beta_3S_i$, $\bm v_i\sim N(0,1)$ and $1_4$ denotes the vector $(1,1,1,1)$.

Firstly, we test the decreasing rates of $\mathbb{E}[\|\nabla_\lambda \Gamma(\bm u;\lambda)\Delta \tilde \psi_{\theta,\ell}\|_2^2]$ for testing  MLMC-based gradient estimation and $\mathbb{E}[|\Delta \psi_{\theta,\ell}|^2]$ for testing  MLMC-based  ELBO estimation. We run the algorithms starting with $\mu=(0,0,0,0)^T,\Sigma=I_4$ and $M_0=16$. We compare the cases of using MC and RQMC in the inner simulation. To get accurate estimates of these quantities, we use RQMC in the outer sampling. As shown in \Cref{fig:norm}, we find that $r=1.52$ for the gradient estimator when RQMC is used in the inner simulation while $r=1.43$ for MC in the inner. Also, RQMC leads to a larger $r=1.96$ for the ELBO estimator. When MC is used in the inner, we take $\alpha=1.4$ to finalize the probability distribution of $w_\ell$. While $\alpha=1.5$ when RQMC is used in the inner.  A large $\alpha$ speeds up the VB algorithm. According to \cref{eq:cost}, RQMC reduces the cost by a factor of $16\%$ compared to MC.

\begin{figure}[htbp]
  \centering
  \caption{Tests of the decrease rates.\label{fig:norm}}
	\subfigure[Gradient of ELBO]{\includegraphics[width=4.2cm]{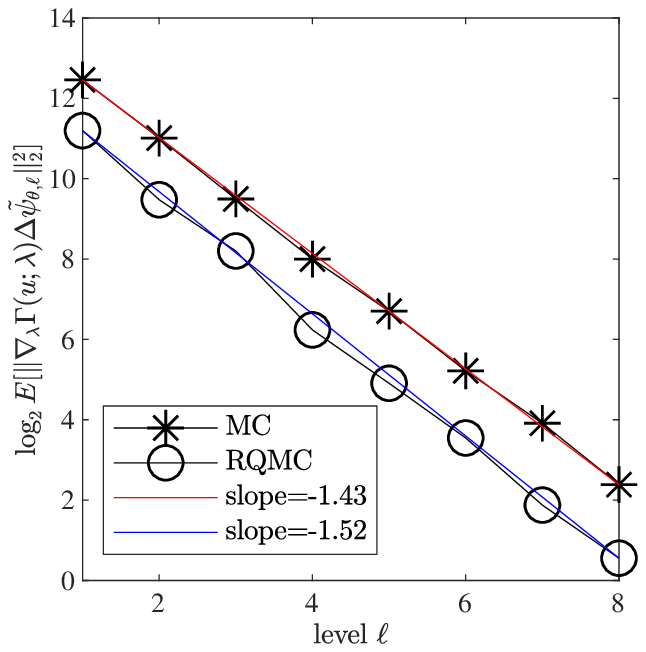}}
    \subfigure[ELBO]{\includegraphics[width=4.2cm]{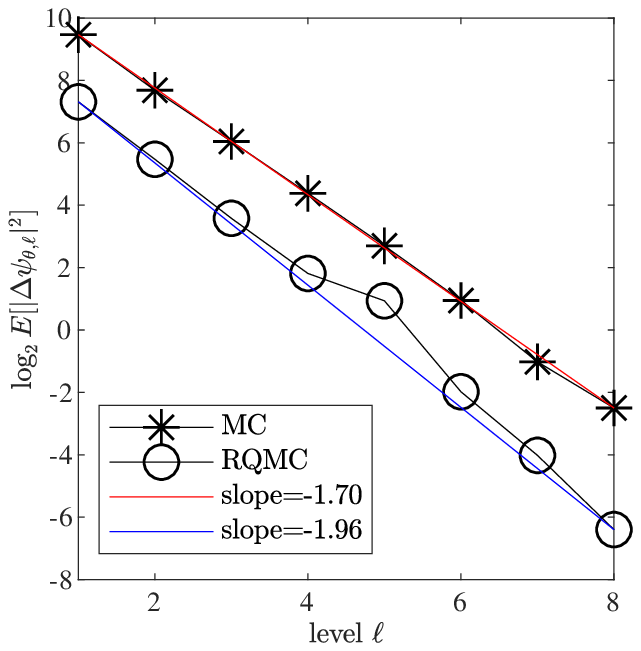}}
  \end{figure}

\begin{table}[htbp]
    \scriptsize
	\centering
	\caption{Variances of unbiased MLMC-based gradient estimators for the initial variational parameters. `I' is short for `Inner', `O' for `Outer', `M' for `MC' and `Q' for `RQMC'. }
	\label{tab:variance}
	\begin{tabular}{ccccc ccccc ccccc}
		\toprule
		 I/O&$\beta_1$&$\beta_2$&$\beta_3$&$\tau^2$&$L_{11}$&$L_{21}$&$L_{31}$&$L_{41}$&$L_{22}$&$L_{23}$&$L_{24}$&$L_{33}$&$L_{34}$&$L_{44}$\\\hline
		M/M&152&164&30&41&253&182&54&118&226&28&74&55&99&32 \\
		M/Q&69&97&11&30&171&142&26&99&215&18&89&29&87&30 \\
       Q/M&111&84&17&22&260&148&34&49&162&31&47&27&33&17 \\
		Q/Q&82&80&13&40&170&146&30&93&161&24&86&20&30&11\\
	\bottomrule
	\end{tabular}
\end{table}

The results in \Cref{fig:norm} show that RQMC can improve the sampling accuracy in the inner simulation with a large $r$, but the effect of RQMC used in the outer simulation is still unclear. To this end, we estimate the variance of the unbiased MLMC-based gradient estimator for the initial variational parameters by $50$ repetitions. The empirical variances are shown in \Cref{tab:variance}. It can be seen that using RQMC in either inner or outer simulation reduce the variances for most parameters. Variance reduction of gradient estimates should help to improve VB.

\begin{figure}[htbp]
  \centering
  \caption{Comparison of VBIL and four unbiased MLMC methods: MC+MC, MC+RQMC, RQMC+MC and RQMC+RQMC.\label{fig:RPmethod}}
    \subfigure[Marginal posterior distributions]{\includegraphics[width=13cm]{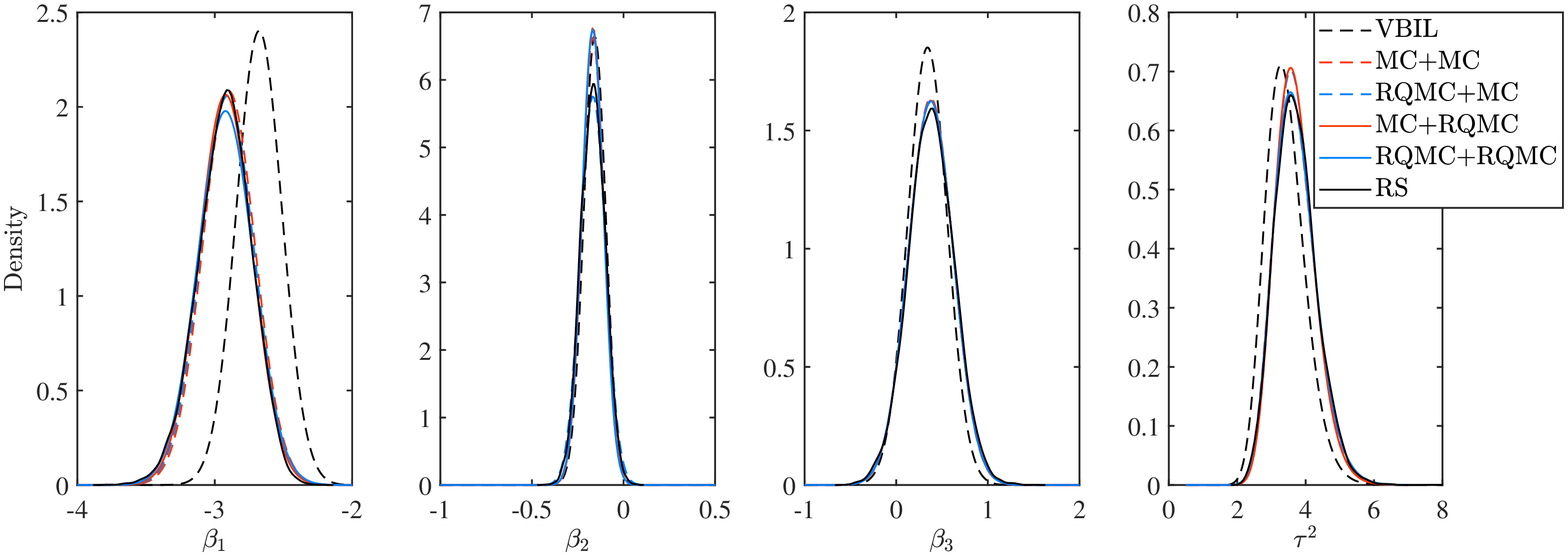}}
    \subfigure[ELBO]{\includegraphics[width=13cm]{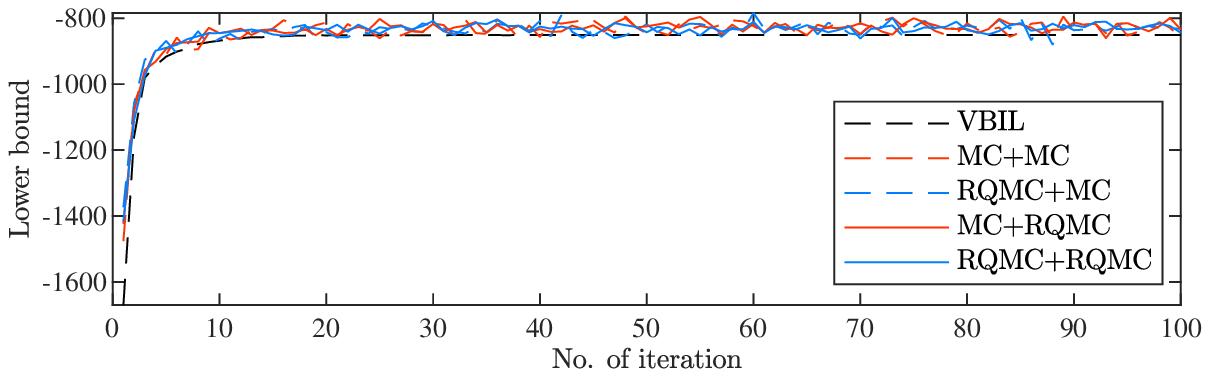}}
\end{figure}

Finally, we compare VBIL with four unbiased MLMC methods: MC+MC, MC+RQMC, RQMC+MC and RQMC+RQMC, where for example, MC+RQMC means the MC method is used in the outer while the RQMC method is used in the inner and so on. We take $M_0=8$ for the unbiased MLMC methods and $N=16$ for VBIL. The RStan package `rstanarm' is used to sample from $p(\theta|\yobs)$ as a benchmark, which performs posterior analysis for
models with dependent data such as GLMMs. As shown in Figure \ref{fig:RPmethod}, unbiased MLMC-based methods show great consistency with the benchmark distribution (labeled as RS). On the other hand, all unbiased MLMC methods lead to larger ELBOs than VBIL.

\section{Concluding remarks}\label{sec:concl}
In this paper, we developed a general method to deal with VB problems with intractable likelihoods. The central point is to find an unbiased gradient estimator in stochastic gradient-based optimization. We achieve this goal by designing unbiased nested MLMC estimators for both the SF and RP gradients. Compared to VBIL, our proposed methods find a better fitting of the posterior distribution and a tighter estimate of the marginal likelihood. Compared to VBSL, our methods work with general distributions of summary statistics. To improve the sampling efficiency, we incorporated RQMC in the inner and the outer simulations. Using RQMC in the inner simulation can reduce the average cost of unbiased MLMC. Using RQMC in the outer simulation can reduce the variance of the gradient estimator.
Both aspects speed up the VB algorithm.



\begin{thebibliography}{10}

\bibitem{AKM:2009}
{\sc D.~Allingham, R.~A. King, and K.~L. Mengersen}, {\em Bayesian estimation
  of quantile distributions}, Stat. Comput., 19 (2009), pp.~189--201,
  \url{https://doi.org/10.1007/s11222-008-9083-x}.

\bibitem{BC:2014}
{\sc S.~Barthelm{\'e} and N.~Chopin}, {\em Expectation propagation for
  likelihood-free inference}, J. Amer. Statist. Assoc., 109 (2014),
  pp.~315--333, \url{https://doi.org/10.1080/01621459.2013.864178}.

\bibitem{BC:2019}
{\sc A.~Buchholz and N.~Chopin}, {\em Improving approximate {B}ayesian
  computation via quasi-{M}onte {C}arlo}, J. Comput. Graph. Statist., 28
  (2019), pp.~205--219, \url{https://doi.org/10.1080/10618600.2018.1497511}.

\bibitem{BWM:2018}
{\sc A.~Buchholz, F.~Wenzel, and S.~Mandt}, {\em Quasi-{M}onte {C}arlo
  variational inference}, in International Conference on Machine Learning,
  2018, pp.~668--677, \url{https://arxiv.org/abs/1807.01604}.

\bibitem{Bujok:2015}
{\sc K.~Bujok, B.~M. Hambly, and C.~Reisinger}, {\em Multilevel simulation of
  functionals of {B}ernoulli random bariables with application to basket credit
  derivatives}, Methodol. Comput. Appl. Probab., 17 (2015), pp.~579--604,
  \url{https://doi.org/10.1007/s11009-013-9380-5}.

\bibitem{DP:2011}
{\sc C.~C. Drovandi and A.~N. Pettitt}, {\em Likelihood-free {B}ayesian
  estimation of multivariate quantile distributions}, Comput. Statist. Data
  Anal., 55 (2011), pp.~2541--2556,
  \url{https://doi.org/10.1016/j.csda.2011.03.019}.

\bibitem{DHS:2011}
{\sc J.~Duchi, E.~Hazan, and Y.~Singer}, {\em Adaptive subgradient methods for
  online learning and stochastic optimization.}, J. Mach. Learn. Res., 12
  (2011), p.~2121¨C2159, \url{https://dl.acm.org/doi/10.5555/1953048.2021068}.

\bibitem{DJ:2018}
{\sc J.~C. Duchi}, {\em Introductory lectures on stochastic optimization}, The
  Mathematics of Data, 25 (2018), pp.~99--185.

\bibitem{durbin:2012}
{\sc J.~Durbin and S.~J. Koopman}, {\em Time {S}eries {A}nalysis by {S}tate
  {S}pace {M}ethods}, Oxford : Oxford University Press, 2nd~ed.

\bibitem{DK:1997}
{\sc J.~Durbin and S.~J. Koopman}, {\em Monte {C}arlo maximum likelihood
  estimation for non-{G}aussian state space models}, Biometrika, 84 (1997),
  pp.~669--684, \url{http://www.jstor.org/stable/2337587}.

\bibitem{Fitz:1993}
{\sc G.~M. Fitzmaurice and N.~M. Laird}, {\em A likelihood-based method for
  analysing longitudinal binary responses}, Biometrika, 80 (1993),
  pp.~141--151, \url{https://doi.org/10.1093/biomet/80.1.141}.

\bibitem{Giles2008}
{\sc M.~B. Giles}, {\em Multilevel {M}onte {C}arlo path simulation}, Oper.
  Res., 56 (2008), pp.~607--617, \url{https://doi.org/10.1287/opre.1070.0496}.

\bibitem{Giles2015}
{\sc M.~B. Giles}, {\em Multilevel {M}onte {C}arlo methods}, Acta Numer., 24
  (2015), pp.~259--328, \url{https://doi.org/10.1017/S096249291500001X}.

\bibitem{giles:2018b}
{\sc M.~B. Giles}, {\em {MLMC} for nested expectations}, in Contemporary
  Computational Mathematics-A Celebration of the 80th Birthday of Ian Sloan,
  2018, pp.~425--442.

\bibitem{GG2019}
{\sc M.~B. Giles and T.~Goda}, {\em Decision-making under uncertainty: {U}sing
  {MLMC} for efficient estimation of {EVPPI}}, Stat. Comput., 29 (2019),
  pp.~739--751, \url{https://doi.org/10.1007/s11222-018-9835-1}.

\bibitem{GZ:2014}
{\sc M.~B. Giles and L.~Szpruch}, {\em Antithetic multilevel {M}onte {C}arlo
  estimation for multi-dimensional {SDE}s without {L}{\'e}vy area simulation},
  Ann. Appl. Probab., 24 (2014), pp.~1585--1620,
  \url{https://doi.org/10.1214/13-AAP957}.

\bibitem{GHI:2020}
{\sc T.~Goda, T.~Hironaka, and T.~Iwamoto}, {\em Multilevel {M}onte {C}arlo
  estimation of expected information gains}, Stoch. Anal. Appl., 38 (2020),
  pp.~581--600, \url{https://doi.org/10.1080/07362994.2019.1705168}.

\bibitem{goda:2020}
{\sc T.~Goda, T.~Hironaka, and W.~Kitade}, {\em Unbiased {MLMC} stochastic
  gradient-based optimization of {B}ayesian experimental designs}, arXiv
  preprint arXiv:2005.08414,  (2020), \url{https://arxiv.org/abs/2005.08414}.

\bibitem{Hein1998}
{\sc S.~Heinrich}, {\em Monte {C}arlo complexity of global solution of integral
  equations}, J. Complexity, 14 (1998), pp.~151--175,
  \url{https://doi.org/10.1006/jcom.1998.0471}.

\bibitem{KB:2014}
{\sc D.~P. Kingma and J.~Ba}, {\em Adam: {A} method for stochastic
  optimization}, arXiv preprint arXiv:1412.6980,  (2014),
  \url{https://arxiv.org/abs/1412.6980}.

\bibitem{KW:2013}
{\sc D.~P. Kingma and M.~Welling}, {\em Auto-encoding variational {B}ayes},
  arXiv preprint arXiv:1312.6114,  (2013),
  \url{https://arxiv.org/abs/1312.6114}.

\bibitem{LEcuyer2005}
{\sc P.~L'Ecuyer and C.~Lemieux}, {\em Recent advances in randomized
  quasi-{M}onte {C}arlo methods}, in Modeling Uncertainty: An Examination of
  Stochastic Theory, Methods, and Applications, M.~Dror, P.~L'Ecuyer, and
  F.~Szidarovszky, eds., Kluwer Academic Publishers, 2005, pp.~419--474,
  \url{https://doi.org/10.1007/0-306-48102-2_20}.

\bibitem{LO:2021}
{\sc S.~Liu and A.~B. Owen}, {\em Quasi-{N}ewton quasi-{M}onte {C}arlo for
  variational {B}ayes}, arXiv preprint arXiv:2104.02865,  (2021),
  \url{https://arxiv.org/abs/2104.02865}.

\bibitem{MF:2017}
{\sc A.~C. Miller, N.~J. Foti, A.~D'Amour, and R.~P. Adams}, {\em Reducing
  reparameterization gradient variance}, in Advances in Neural Information
  Processing Systems, 2017, \url{https://arxiv.org/abs/1705.07880}.

\bibitem{ong:2018}
{\sc V.~M. Ong, D.~J. Nott, M.-N. Tran, S.~A. Sisson, and C.~C. Drovandi}, {\em
  Variational {B}ayes with synthetic likelihood}, Stat. Comput., 28 (2018),
  pp.~971--988, \url{https://doi.org/10.1007/s11222-017-9773-3}.

\bibitem{Owen1995}
{\sc A.~B. Owen}, {\em Randomly permuted (t, m, s)-nets and (t, s)-sequences},
  in Monte Carlo and Quasi-Monte Carlo Methods in Scientific Computing,
  H.~Niederreiter and P.~J.-S. Shiue, eds., Springer, 1995, pp.~299--317.

\bibitem{owen1997a}
{\sc A.~B. Owen}, {\em Scrambled net variance for integrals of smooth
  functions}, Ann. Statist., 25 (1997), pp.~1541--1562,
  \url{https://doi.org/10.1214/aos/1031594731}.

\bibitem{OR:2021}
{\sc A.~B. Owen and D.~Rudolf}, {\em A strong law of large numbers for
  scrambled net integration}, SIAM Rev., 63 (2021), pp.~360--372,
  \url{https://doi.org/10.1137/20M1320535}.

\bibitem{PBJ:2012}
{\sc J.~Paisley, D.~Blei, and M.~Jordan}, {\em Variational {B}ayesian inference
  with stochastic search}, in Proceedings of the 29th International Coference
  on International Conference on Machine Learning, 2012, pp.~1363--1370.

\bibitem{peters2012}
{\sc G.~Peters, S.~Sisson, and Y.~Fan}, {\em Likelihood-free {B}ayesian
  inference for $\alpha$-stable models}, Comput. Statist. Data Anal., 56
  (2012), pp.~3743--3756, \url{https://doi.org/10.1016/j.csda.2010.10.004}.

\bibitem{PSGK:2012}
{\sc M.~K. Pitt, R.~S. Silva, P.~Giordani, and R.~Kohn}, {\em On some
  properties of {M}arkov chain {M}onte {C}arlo simulation methods based on the
  particle filter}, J. Econometrics, 171 (2012), pp.~134--151,
  \url{https://doi.org/10.1016/j.jeconom.2012.06.004}.

\bibitem{RM:2002}
{\sc G.~Rayner and H.~MacGillivray}, {\em Weighted quantile-based estimation
  for a class of transformation distributions}, Comput. Statist. Data Anal., 39
  (2002), pp.~401--433, \url{https://doi.org/10.1016/S0167-9473(01)00090-1}.

\bibitem{Rhee2015}
{\sc C.-H. Rhee and P.~W. Glynn}, {\em Unbiased estimation with square root
  convergence for {SDE} models}, Oper. Res., 63 (2015), pp.~1026--1043,
  \url{https://doi.org/10.1287/opre.2015.1404}.

\bibitem{tavare1997}
{\sc S.~Tavare, D.~J. Balding, R.~C. Griffiths, and P.~Donnelly}, {\em
  Inferring coalescence times from {DNA} sequence data}, Genetics, 145 (1997),
  pp.~505--518, \url{https://doi.org/10.1093/genetics/145.2.505}.

\bibitem{tran:2017}
{\sc M.-N. Tran, D.~J. Nott, and R.~Kohn}, {\em Variational {B}ayes with
  intractable likelihood}, J. Comput. Graph. Statist., 26 (2017), pp.~873--882,
  \url{https://doi.org/10.1080/10618600.2017.1330205}.

\bibitem{Xu:2019}
{\sc M.~Xu, M.~Quiroz, R.~Kohn, and S.~A. Sisson}, {\em Variance reduction
  properties of the reparameterization trick}, in The 22nd International
  Conference on Artificial Intelligence and Statistics, 2019, pp.~2711--2720,
  \url{https://arxiv.org/abs/1809.10330}.

\end{thebibliography}

\end{document}